   \newtheorem{lemma}{Lemma}[section]
   \newtheorem{theorem}{Theorem}[section]
   \newcommand{\be}{\begin{equation}}
   \newcommand{\ee}{\end{equation}}
\begin{document}
    \title{A Robust Three-Level Time Split High-Order Leapfrog/
    Crank-Nicolson Scheme For Two-Dimensional Sobolev and Regularized Long Wave Equations Arising In Fluid Mechanics}
   \author{Eric Ngondiep$^{\text{\,a\,b}}$}
   \date{$^{\text{\,a\,}}$\small{Department of Mathematics and Statistics, College of Science, Imam Mohammad Ibn Saud\\ Islamic University
        (IMSIU), $90950$ Riyadh $11632,$ Saudi Arabia.}\\
     \text{\,}\\
       $^{\text{\,b\,}}$\small{Hydrological Research Centre, Institute for Geological and Mining Research, 4110 Yaounde-Cameroon.}\\
     \text{,}\\
        \textbf{Email addresses:} ericngondiep@gmail.com/engondiep@imamu.edu.sa}
   \maketitle

   \textbf{Abstract.}
   This paper develops a robust three-level time split high-order Leapfrog/Crank-Nicolson technique for solving the two-dimensional unsteady sobolev and regularized long wave equations arising in fluid mechanics. A deep analysis of the stability and error estimates of the proposed approach is considered using the $L^{\infty}(0,T;H^{2})$-norm. Under a suitable time step requirement, the theoretical studies indicate that the constructed numerical scheme is strongly stable (in the sense of $L^{\infty}(0,T;H^{2})$-norm), temporal second-order accurate and convergence of order $O(h^{\frac{8}{3}})$ in space, where $h$ denotes the grid step. This result suggests that the proposed algorithm is less time consuming, fast and more efficient than a broad range of numerical methods widely discussed in the literature for the considered problem. Numerical experiments confirm the theory and demonstrate the efficiency and utility of the three-level time split high-order formulation.\\
    \text{\,}\\

   \ \noindent {\bf Keywords:} Sobolev and regularized long wave equations, Leapfrog scheme, Crank-Nicolson method, three-level time-split high-order Leapfrog/Crank-Nicolson approach, stability analysis, error estimates.\\
   \\
   {\bf AMS Subject Classification (MSC). 65M12, 65M06}.

  \section{Introduction}\label{sec1}
   The exploration of analytical solutions of the time-dependent partial differential equations (PDEs) plays a vital role in describing the behavior of different physical and biological phenomena arising in the areas of mathematical biology, fluid dynamics, engineering, chemical theory, bio-modeling and fluid mechanics. A large class of unsteady systems of PDEs have been used to model various problems in chemistry, physics, biology and engineering such as: chemical kinematics, fluid mechanics, electricity, nonstationary process in semiconductors in the presence of sources, propagation of wave and shallow water waves, optical fibers, flow of heat, plasma physics, immunology, quantum mechanics, sobolev and regularized long wave models and biology \cite{3lsa,4lsa,5lsa,6lsa,30ks,1ks,2ks,12ks,16ks,16flw} and references therein. The evolutionary two-dimensional sobolev and regularized long wave problems usually arise in the flow of fluids to explaining the motion of wave in media. This model is associated with the Rossy and drift waves in rotating fluids and plasmas, respectively, and it describes a broad range of applications in different branches in engineering and science \cite{1ks}. Developing both exact and efficient numerical solutions for different types of sobolev and regularized long wave equations is an attractive area of research in applied science.\\

   In this paper, we consider the two-dimensional sobolev and regularized long wave equations defined in \cite{1ks} as
     \begin{equation}\label{1}
      u_{t}-\alpha\Delta u_{t}-\gamma\Delta u+(\beta,\beta)\cdot\nabla u=f(x,y,t,u,u_{x},u_{y}),\text{\,\,\,\,on\,\,\,\,}\Omega\times(0,T],
     \end{equation}
     with initial condition
      \begin{equation}\label{2}
      u(x,y,0)=u_{0}(x,y),\text{\,\,\,\,on\,\,\,\,}\Omega\times\partial\Omega,
     \end{equation}
     and boundary condition
      \begin{equation}\label{3}
      u(x,y,t)=g(x,y,t)\text{\,\,\,\,on\,\,\,\,}\partial\Omega\times[0,T],
     \end{equation}
     where $f(x,y,t,u,u_{x},u_{y})=f_{1}(x,y,t,u,u_{x})+f_{2}(x,y,t,u,u_{y})$, and $f_{m}$, for $m=1,2$, are nonlinear functions. For the sake of stability analysis and error estimates, we assume that the functions $f_{m}$ are locally Lipschitz with respect to the unknown $u$. $\Delta$ and $\nabla$ designate the Laplacian and gradient operators, respectively. $u_{z}$ represents $\frac{\partial u}{\partial z}$, for $z=x,y,t$. $\alpha$, $\beta$ and $\gamma$ are nonnegative constant less than one, with $\alpha\neq0$, $u_{0}$ and $g$ denote the initial and boundary conditions, respectively. Equation $(\ref{1})$ is a third-order mixed derivative in both time and space and it is referred to the sobolev equations arising in flow of liquid through the theory of heat conduction, fissured rocks and non-study flow \cite{4ks,3ks,2ks}. When $\alpha=0$, equation $(\ref{1})$ becomes a nonlinear convection-diffusion-reaction model which has been widely studied in the literature \cite{1en,2en,3en,4en,5en,6en,7en,8en}. For $\alpha\neq0$, a large set of numerical methods have been developed in an approximate solution of the initial-boundary value problem $(\ref{1})$-$(\ref{3})$, such as: Galerkin finite element methods, split least-square plan, Runge Kutta method, conservative scheme, a computational approach, Lumped Galerkin procedure, etc.... For more details, we refer the readers to \cite{5ks,6ks,7ks,8ks,10ks,11ks,13ks,15ks,16ks} and references therein. For these methods, either the stability analysis or the error estimates has not been considered. In this work, we develop a three-level time split high-order Leapfrog/Crank-Nicolson approach for solving the partial differential equation $(\ref{1})$ subjects to initial-boundary conditions $(\ref{2})$-$(\ref{3})$. Under an appropriate time step limitation, the proposed formulation is strongly stable (in the sense of $L^{\infty}(0,T;H^{2})$-norm), temporal second-order accurate and convergence in space with order $O(h^{\frac{8}{3}})$ in the $L^{\infty}(0,T;H^{2})$-norm, where $h$ denotes the space step. This result suggests that the developed approach is faster and more efficient than a broad range of numerical schemes \cite{1ks,ks,6ks,7ks,13ks,11ks,16ks} widely studied in the literature for the considered problem $(\ref{1})$-$(\ref{3})$. Furthermore, the new three-level time split technique should be considered as a strong numerical technique for integrating a general system of nonlinear PDEs.\\

     The highlights of the paper is the following items:
     \begin{description}
      \item[i)] development of the three-level time split high-order Leapfrog/Crank-Nicolson scheme for solving the initial-boundary value problem $(\ref{1})$-$(\ref{3})$,
      \item[ii)] stability analysis and error estimates of the proposed numerical approach,
      \item[iii)] some numerical examples that confirm the theoretical studies.
     \end{description}

     The remainder of the paper is organized as follows. In Section $\ref{sec2}$, we construct the three-level time split high-order Leapfrog/Crank-Nicolson technique for solving the model problem $(\ref{1})$-$(\ref{3})$. Section $\ref{sec3}$ provides a deep analysis of the stability and error estimates of the new algorithm whereas some numerical evidences are considered in  Section $\ref{sec4}$. Section $\ref{sec5}$ presents the general conclusions together with our future investigations.

    \section{Development of the three-level time split approach}\label{sec2}
    In this section, we construct a three-level time split high-order Leapfrog/Crank-Nicolson scheme in a numerical solution of the two-dimensional time dependent sobolev and regularized long wave equation $(\ref{1})$ with initial condition $(\ref{2})$ and boundary condition $(\ref{3})$. The splitting method describes in this work is as follows: separate equation $(\ref{1})$ into two distinct equations allows to write
       \begin{equation}\label{4}
        u_{t}-\alpha u_{txx}-\gamma u_{xx}+\beta u_{x}=f_{1}(x,y,t,u,u_{x}),
       \end{equation}
       \begin{equation}\label{5}
        u_{t}-\alpha u_{tyy}-\gamma u_{yy}+\beta u_{y}=f_{2}(x,y,t,u,u_{y}).
        \end{equation}
       Let $M$ and $N$ be positive integers and $L_{l}$, for $l=1,2,3,4$, be four real numbers such that: $L_{1}<L_{2}$ and $L_{3}<L_{4}$. We set $\Omega=(L_{1},L_{2})\times(L_{3},L_{4})$, be the region of fluid, $k=\frac{T}{N}$, $h_{x}=\frac{L_{2}-L_{1}}{M}$ and $h_{y}=\frac{L_{4}-L_{3}}{M}$, be the time step and mesh steps in the $x$-direction and $y$-direction, respectively. For the convenience of writing we set $u^{n}_{ij}=u(x_{i},y_{j},t_{n})$ and $U^{n}_{ij}=U(x_{i},y_{j},t_{n})$, be the analytical solution and the approximate one, respectively, at the discrete point $(x_{i},y_{j},t_{n})$, where $x_{i}=L_{1}+ih_{x}$, $y_{j}=L_{3}+jh_{y}$, and $t_{n}=nk$, for $i,j=0,1,2,...,M$, and $n=0,1,2,...,N$. In addition, suppose $\Omega_{k}=\{t_{n},\text{\,\,\,}n=0,1,2,...,N\}$, $\overline{\Omega}_{h_{xy}}=\{(x_{i},y_{j}),\text{\,\,} i,j=0,1,...,M\}$, $\Omega_{h_{xy}}=\overline{\Omega}_{h_{xy}}\cap\Omega$ and $\partial\Omega_{h_{xy}}=\partial\overline{\Omega}_{h_{xy}}\cap\Omega$, be regular partitions of domains: $[0,T]$, $\overline{\Omega}$, $\Omega$ and $\partial\Omega$, respectively. Thus, the space of mesh functions defined over the domain $\Omega_{h_{xy}}\times\Omega_{k}$ is given by $\mathcal{U}_{h_{xyk}}=\{v_{ij}^{n},\text{\,}0\leq i,j\leq M;\text{\,}0\leq n\leq N\}$.  Furthermore, the value of the source term $f_{m}(x,y,t,u,u_{z})$, for $m=1,2$, $z\in\{x,y\}$, at the grid point $(x_{i},y_{j},t_{n})$ is represented by $f_{m}(x_{i},y_{j},t_{n},u_{ij}^{n},u_{z,ij}^{n})$. Lastly, we consider the following linear operators
        \begin{equation*}
        \delta_{t} u_{ij}^{n}=\frac{u_{ij}^{n+\frac{1}{2}}-u_{ij}^{n}}{k/2},\text{\,}\widehat{\delta}_{t} u_{ij}^{n}=
        \frac{u_{ij}^{n}-u_{ij}^{n-\frac{1}{2}}}{k/2},\text{\,}\delta_{x}^{2}u_{ij}^{n}=\frac{u_{i+1,j}^{n}-2u_{ij}^{n}+u_{i-1,j}^{n}}{h_{x}^{2}}
        ,\text{\,}\delta_{y}^{2}u_{ij}^{n}=\frac{u_{i,j+1}^{n}-2u_{ij}^{n}+u_{i,j-1}^{n}}{h_{y}^{2}},
       \end{equation*}
       \begin{equation*}
        \delta_{x}u_{i-\frac{1}{2},j}^{n}=\frac{u_{ij}^{n}-u_{i-1,j}^{n}}{h_{x}},\text{\,}\delta_{x}u_{i+\frac{1}{2},j}^{n}=
        \frac{u_{i+1,j}^{n}-u_{ij}^{n}}{h_{x}},\text{\,} \delta_{y}u_{i,j-\frac{1}{2}}^{n}=\frac{u_{ij}^{n}
        -u_{i,j-1}^{n}}{h_{y}},\text{\,}\delta_{y}u_{i,j+\frac{1}{2}}^{n}=\frac{u_{i,j+1}^{n}-u_{ij}^{n}}{h_{y}}.
       \end{equation*}
       \begin{equation*}
        \delta_{x}^{4}u_{ij}^{n}=\frac{1}{12h_{x}}\left[-u_{i-2,j}^{n}+8u_{i-1,j}^{n}-8u_{i+1,j}^{n}+u_{i+2,j}^{n}\right],\text{\,}
        \delta_{y}^{4}u_{ij}^{n}=\frac{1}{12h_{y}}\left[-u_{i,j-2}^{n}+8u_{i,j-1}^{n}-8u_{i,j+1}^{n}+u_{i,j+2}^{n}\right],
       \end{equation*}
       \begin{equation*}
        \delta_{2x}^{4}u_{ij}^{n}=\frac{1}{12h_{x}^{2}}\left[-u_{i-2,j}^{n}+16u_{i-1,j}^{n}-30u_{ij}^{n}+16u_{i+1,j}^{n}-u_{i+2,j}^{n}\right],
       \end{equation*}
       \begin{equation}\label{6}
       \delta_{2y}^{4}u_{ij}^{n}=\frac{1}{12h_{y}^{2}}\left[-u_{i,j-2}^{n}+16u_{i,j-1}^{n}-30u_{ij}^{n}+16u_{i,j+1}^{n}-u_{i,j+2}^{n}\right].
       \end{equation}
       Furthermore, we introduce the following discrete norms
      \small{\begin{equation*}
        \|u^{n}\|_{2}=\sqrt{h_{x}h_{y}\underset{i=2}{\overset{M-2}\sum}\underset{j=2}{\overset{M-2}\sum}(u^{n}_{ij})^{2}},\text{\,\,\,}
        \|\delta_{x}u^{n}\|_{2}=\sqrt{h_{x}h_{y}\underset{i=1}{\overset{M-2}\sum}\underset{j=2}{\overset{M-2}\sum}(\delta_{x}u^{n}_{i+\frac{1}{2},j})^{2}},\text{\,\,\,}
        \|\delta_{y}u^{n}\|_{2}=\sqrt{h_{x}h_{y}\underset{i=2}{\overset{M-2}\sum}\underset{j=1}{\overset{M-2}\sum}(\delta_{y}u^{n}_{i,j+\frac{1}{2}})^{2}},
       \end{equation*}}
       \small{\begin{equation*}
        \|\delta_{x}^{2}u^{n}\|_{2}=\sqrt{h_{x}h_{y}\underset{i=1}{\overset{M-1}\sum}\underset{j=2}{\overset{M-2}\sum}(\delta_{x}^{2}u^{n}_{ij})^{2}},\text{\,}
         \|\delta_{y}^{2}u^{n}\|_{2}=\sqrt{h_{x}h_{y}\underset{i=2}{\overset{M-2}\sum}\underset{j=1}{\overset{M-1}\sum}(\delta_{x}^{2}u^{n}_{ij})^{2}},\text{\,}
         \||u^{n}|\|_{2,\infty}=\underset{0\leq n\leq N}{\max}\|u^{n}\|_{2},
       \end{equation*}}
       \small{\begin{equation}\label{dn}
         \|u^{n}\|_{H^{2}}=\sqrt{\|u^{n}\|_{2}^{2}+\alpha\left[\|\delta_{x}u^{n}\|_{2}^{2}+\|\delta_{y}u^{n}\|_{2}^{2}+12^{-1}h^{2}
         \left(\|\delta_{x}^{2}u^{n}\|_{2}^{2}+\|\delta_{y}^{2}u^{n}\|_{2}^{2}\right)\right]},\text{\,\,\,}\||u|\|_{H^{2},\infty}=\underset{0\leq n\leq N}{\max}\|u^{n}\|_{H^{2}}.
       \end{equation}}
       In addition, the Hilbert space $L^{2}(\Omega)$ is equipped with the scalar product $\left(\cdot,\cdot\right)_{2}$ defined as
       \begin{equation*}
        \left(u^{n},v^{n}\right)_{2}=h_{x}h_{y}\underset{i=2}{\overset{M-2}\sum}\underset{j=2}{\overset{M-2}\sum}u_{ij}^{n}v_{ij}^{n},\text{\,\,}
        \left(\delta_{x}u^{n},v^{n}\right)_{2}=h_{x}h_{y}\underset{i=1}{\overset{M-2}\sum}\underset{j=2}{\overset{M-2}\sum}
        \delta_{x}u_{i+\frac{1}{2},j}^{n}v_{ij}^{n},
     \end{equation*}
     \begin{equation*}
       \left(\delta_{y}u^{n},v^{n}\right)_{2}=h_{x}h_{y}\underset{i=2}{\overset{M-2}\sum}\underset{j=1}{\overset{M-2}\sum}
       \delta_{y}u_{i,j+\frac{1}{2}}^{n}v_{ij}^{n},\text{\,\,}\left(\delta_{x}u^{n},\delta_{x}v^{n}\right)_{2}=h_{x}h_{y}\underset{i=1}{\overset{M-2}\sum}
       \underset{j=2}{\overset{M-2}\sum}\delta_{x}u_{i+\frac{1}{2},j}^{n}\delta_{x}v_{i+\frac{1}{2},j}^{n},
     \end{equation*}
       \begin{equation}\label{sp}
              \left(\delta_{y}^{2}u^{n},\delta_{y}^{2}v^{n}\right)_{2}=h_{x}h_{y}\underset{i=2}{\overset{M-2}\sum}\underset{j=1}{\overset{M-1}\sum}
              \delta_{y}^{2}u_{ij}^{n}\delta_{y}^{2}v_{ij}^{n}\text{\,\,\,and\,\,\,}\left(\delta_{x}^{2}u^{n},\delta_{x}^{2}v^{n}\right)_{2}
              =h_{x}h_{y}\underset{i=1}{\overset{M-1}\sum}\underset{j=2}{\overset{M-2}\sum}\delta_{x}^{2}u_{ij}^{n}\delta_{x}^{2}v_{ij}^{n}.
       \end{equation}
       In a similar manner, one defines the terms $\left(u^{n},\delta_{x}v^{n}\right)_{2}$, $\left(u^{n},\delta_{y}v^{n}\right)_{2}$ and $\left(\delta_{y}u^{n},\delta_{y}v^{n}\right)_{2}$. The spaces $L^{2}(\Omega)$, $H^{2}(\Omega)$ and $L^{\infty}(0,T;H^{2})$ are equipped with the norms: $\|\cdot\|_{2}$, $\|\cdot\|_{H^{2}}$ and $\||\cdot|\|_{H^{2},\infty}$, respectively, whereas the Hilbert space $L^{2}(\Omega)$, is endowed with the scalar product $\left(\cdot,\cdot\right)_{2}$.\\

        It is worth mentioning that a three-level time split Leapfrog/Crank-Nicolson technique splits the combined numerical scheme into a sequence of one-dimensional operators, thus providing a less time stability requirement. Specifically, the splitting should advance the solution in each direction with a maximum allowable time step. This suggests that the proposed scheme has to be more efficient than the time-split MacCormack procedure. For more details about the time-split MacCormack method, the readers can consult the works discussed in \cite{9en,10en,12en,13en}.\\

        Applying equation $(\ref{4})$ at the grid point $(x_{i},y_{j},t_{n})$ to obtain
        \begin{equation}\label{7}
        u_{t,ij}^{n}-\alpha u_{txx,ij}^{n}-\gamma u_{xx,ij}^{n}+\beta u_{x,ij}^{n}=f_{1}(x_{i},y_{j},t_{n},u_{ij}^{n},u_{x,ij}^{n}).
       \end{equation}
        Expanding the Taylor series for the function $u$ at the grid point $(x_{i},y_{j},t_{n})$ using forward and backward difference representations to get
        \begin{equation*}
          u^{n+\frac{1}{2}}_{ij}=u_{ij}^{n}+\frac{k}{2}u_{t,ij}^{n}+\frac{k^{2}}{8}u_{2t,ij}^{n}+O(k^{3})\text{\,\,\,\,and\,\,\,\,}
        u^{n-\frac{1}{2}}_{ij}=u_{ij}^{n}-\frac{k}{2}u_{t,ij}^{n}+\frac{k^{2}}{8}u_{2t,ij}^{n}+O(k^{3}).
     \end{equation*}
        Subtracting the second equation from the first one, this gives
      \begin{equation*}
      u^{n+\frac{1}{2}}_{ij}-u^{n-\frac{1}{2}}_{ij}=ku_{t,ij}^{n}+O(k^{3}).
      \end{equation*}
       This equation can be rewritten as
      \begin{equation*}
       u_{t,ij}^{n}=\frac{1}{k}(u^{n+\frac{1}{2}}_{ij}-u^{n-\frac{1}{2}}_{ij})+O(k^{2}).
     \end{equation*}
        Plugging this equation and $(\ref{7})$, it is easy to see that
     \begin{equation}\label{8}
      \frac{1}{k}\left(u^{n+\frac{1}{2}}_{ij}-u^{n-\frac{1}{2}}_{ij}\right)-\frac{\alpha}{k}\left(u^{n+\frac{1}{2}}_{xx,ij}-
      u^{n-\frac{1}{2}}_{xx,ij}\right)-\gamma u^{n}_{xx,ij}+\beta u_{x,ij}^{n}=f_{1}(x_{i},y_{j},t_{n},u_{ij}^{n},u_{x,ij}^{n})+O(k^{2}).
     \end{equation}
       In \cite{11en}, with the use of the Taylor series the author has established that
       \begin{equation}\label{9}
      u_{zz,ij}^{q}=\delta_{2z}^{4}u^{q}_{ij}+O(h_{z}^{4}),\text{\,\,\,\,\,}u_{z,ij}^{q}=\delta_{z}^{4}u^{q}_{ij}+O(h_{z}^{4}),
       \end{equation}
      where $z=x,y$, and $q$ is any nonnegative rational number. For $z=x$, substituting equation $(\ref{9})$ into $(\ref{8})$ provides
      \begin{equation*}
       \frac{1}{k}\left(u^{n+\frac{1}{2}}_{ij}-u^{n-\frac{1}{2}}_{ij}\right)-\frac{\alpha}{k}\left(\delta_{2x}^{4}u^{n+\frac{1}{2}}_{ij}-
      \delta_{2x}^{4}u^{n-\frac{1}{2}}_{ij}\right)-\gamma\delta_{2x}^{4}u^{n}_{ij}+\beta\delta_{x}^{4}u_{ij}^{n}=
      f_{1}\left(x_{i},y_{j},t_{n},u_{ij}^{n},\delta_{x}^{4}u_{ij}^{n}+O(h_{x}^{4})\right)+O(k^{2}+k^{-1}h_{x}^{4})),
     \end{equation*}
     which is equivalent to
      \begin{equation}\label{10}
        \left(\mathcal{I}-\delta_{2x}^{4}\right)u^{n+\frac{1}{2}}_{ij}=\left(\mathcal{I}-\delta_{2x}^{4}\right)u^{n-\frac{1}{2}}_{ij}+
        \gamma k\delta_{2x}^{4}u^{n}_{ij}-\beta k\delta_{x}^{4}u_{ij}^{n}+kf_{1}\left(x_{i},y_{j},t_{n},u_{ij}^{n}
        ,\delta_{x}^{4}u_{ij}^{n}+O(h_{x}^{4})\right)+O(k^{3}+h_{x}^{4})),
     \end{equation}
     where $\mathcal{I}$ denotes the identity operator. In addition, the application of the Taylor expansion for the function $f_{m}$, $(m=1,2)$, about the discrete point $\left(x_{i},y_{j},t_{n},u,\delta_{x}^{4}u_{ij}^{n}+O(h_{x}^{4})\right)$ gives
     \begin{equation}\label{10a}
     f_{m}\left(x_{i},y_{j},t_{n},u_{ij}^{n},\delta_{x}^{4}u_{ij}^{n}+O(h_{x}^{4})\right)=f_{m}\left(x_{i},y_{j},t_{n},u_{ij}^{n},
     \delta_{x}^{4}u_{ij}^{n}\right)+O(h_{x}^{4}).
     \end{equation}
     Combining $(\ref{10a})$ and $(\ref{10})$ yield
     \begin{equation}\label{11a}
       \left(\mathcal{I}-\delta_{2x}^{4}\right)u^{n+\frac{1}{2}}_{ij}=\left(\mathcal{I}-\delta_{2x}^{4}\right)u^{n-\frac{1}{2}}_{ij}+
        k\left(\gamma\delta_{2x}^{4}-\beta\delta_{x}^{4}\right)u_{ij}^{n}+kf_{1}\left(x_{i},y_{j},t_{n},u_{ij}^{n},\delta_{x}^{4}u_{ij}^{n}\right)
        +O(k^{3}+h_{x}^{4}+kh_{x}^{4}).
       \end{equation}
        Omitting the infinitesimal term $O(k^{3}+h_{x}^{4}+kh_{x}^{4})$ and replacing the exact solution $"u"$ with the computed one $"U"$ to get the first-step of the desired numerical approach, that is,
        \begin{equation}\label{11}
       \left(\mathcal{I}-\delta_{2x}^{4}\right)U^{n+\frac{1}{2}}_{ij}=\left(\mathcal{I}-\delta_{2x}^{4}\right)U^{n-\frac{1}{2}}_{ij}+
        k\left(\gamma\delta_{2x}^{4}-\beta\delta_{x}^{4}\right)U_{ij}^{n}+kf_{1}\left(x_{i},y_{j},t_{n},U_{ij}^{n},\delta_{x}^{4}U_{ij}^{n}\right).
       \end{equation}
       We recall that the operators $\delta_{2x}^{4}$ and $\delta_{x}^{4}$ are defined in relation $(\ref{6})$.\\

       Applying equation $(\ref{5})$ at the discrete points $(x_{i},y_{j},t_{n+\frac{1}{2}})$ and $(x_{i},y_{j},t_{n+1})$, we obtain
       \begin{equation}\label{12}
        u_{t,ij}^{n+\frac{1}{2}}-\alpha u_{tyy,ij}^{n+\frac{1}{2}}-\gamma u_{yy,ij}^{n+\frac{1}{2}}+\beta u_{y,ij}^{n+\frac{1}{2}}=f_{2}(x_{i},y_{j},t_{n+\frac{1}{2}},u_{ij}^{n+\frac{1}{2}},u_{y,ij}^{n+\frac{1}{2}}),
       \end{equation}
       \begin{equation}\label{13}
        u_{t,ij}^{n+1}-\alpha u_{tyy,ij}^{n+1}-\gamma u_{yy,ij}^{n+1}+\beta u_{y,ij}^{n+1}=f_{2}(x_{i},y_{j},t_{n+1},u_{ij}^{n+1},u_{y,ij}^{n+1}).
       \end{equation}
        The application of the Taylor series for the function $u$ at the mesh point $(x_{i},y_{j},t_{n+\frac{1}{2}})$ with time step $k/2$ using forward and backward difference schemes results in
        \begin{equation*}
          u^{n+1}_{ij}=u_{ij}^{n+\frac{1}{2}}+\frac{k}{2}u_{t,ij}^{n+\frac{1}{2}}+\frac{k^{2}}{8}u_{2t,ij}^{n+\frac{1}{2}}+O(k^{3})\text{\,\,\,\,and\,\,\,\,}
        u^{n+\frac{1}{2}}_{ij}=u_{ij}^{n+1}-\frac{k}{2}u_{t,ij}^{n+1}+\frac{k^{2}}{8}u_{2t,ij}^{n+1}+O(k^{3}).
        \end{equation*}
        Subtracting the second equation from the first one and rearranging terms, this gives
      \begin{equation}\label{14}
      u^{n+1}_{ij}-u^{n+\frac{1}{2}}_{ij}=\frac{k}{4}\left(u_{t,ij}^{n+1}+u_{t,ij}^{n+\frac{1}{2}}\right)+\frac{k^{2}}{16}\left(u_{2t,ij}^{n+\frac{1}{2}}
      -u_{2t,ij}^{n+1}\right)+O(k^{3}).
      \end{equation}
      Utilizing the Mean Value theorem, it holds $u_{2t,ij}^{n+\frac{1}{2}}-u_{2t,ij}^{n+1}=O(k)$. This fact combined with equation $(\ref{14})$ provide
      \begin{equation*}
      u^{n+1}_{ij}-u^{n+\frac{1}{2}}_{ij}=\frac{k}{4}\left(u_{t,ij}^{n+1}+u_{t,ij}^{n+\frac{1}{2}}\right)+O(k^{3}).
      \end{equation*}
       Solving this equation for $u_{t,ij}^{n+1}+u_{t,ij}^{n+\frac{1}{2}}$, we get
      \begin{equation}\label{15}
       u_{t,ij}^{n+1}+u_{t,ij}^{n+\frac{1}{2}}=\frac{4}{k}\left(u^{n+1}_{ij}-u^{n+\frac{1}{2}}_{ij}\right)+O(k^{2}).
     \end{equation}
        Plugging equations $(\ref{12})$, $(\ref{13})$ and $(\ref{15})$, simple calculations yield
     \begin{equation*}
      \frac{4}{k}\left(u^{n+1}_{ij}-u^{n+\frac{1}{2}}_{ij}\right)-\frac{4\alpha}{k}\left(u^{n+1}_{yy,ij}-u^{n+\frac{1}{2}}_{yy,ij}\right)
      -\gamma\left(u^{n+1}_{yy,ij}+u^{n+\frac{1}{2}}_{yy,ij}\right)+\beta\left(u^{n+1}_{y,ij}+u^{n+\frac{1}{2}}_{y,ij}\right)=
      f_{2}(x_{i},y_{j},t_{n+1},u_{ij}^{n+1},u_{y,ij}^{n+1})+
     \end{equation*}
     \begin{equation*}
      f_{2}(x_{i},y_{j},t_{n+\frac{1}{2}},u_{ij}^{n+\frac{1}{2}},u_{y,ij}^{n+\frac{1}{2}})+O(k^{2}).
     \end{equation*}
       Utilizing equations $(\ref{9})$ and $(\ref{10a})$, this becomes
      \begin{equation*}
      \frac{4}{k}\left(u^{n+1}_{ij}-u^{n+\frac{1}{2}}_{ij}\right)-\frac{4\alpha}{k}\left(\delta_{2y}^{4}u^{n+1}_{ij}-\delta_{2y}^{4}u^{n+\frac{1}{2}}_{ij}
      \right)-\gamma\left(\delta_{2y}^{4}u^{n+1}_{ij}+\delta_{2y}^{4}u^{n+\frac{1}{2}}_{ij}\right)+\beta\left(\delta_{y}^{4}u^{n+1}_{ij}
      +\delta_{y}^{4}u^{n+\frac{1}{2}}_{ij}\right)=
      \end{equation*}
     \begin{equation*}
     f_{2}(x_{i},y_{j},t_{n+1},u_{ij}^{n+1},\delta_{y}^{4}u_{ij}^{n+1})+f_{2}(x_{i},y_{j},t_{n+\frac{1}{2}},u_{ij}^{n+\frac{1}{2}},
     \delta_{y}^{4}u_{ij}^{n+\frac{1}{2}})+O(k^{2}+k^{-1}h_{y}^{4}+h_{y}^{4}).
     \end{equation*}
      This is equivalent to
      \begin{equation*}
       \left(\mathcal{I}-\alpha\delta_{2y}^{4}\right)u^{n+1}_{ij}-\frac{k}{4}\left(\gamma\delta_{2y}^{4}-\beta\delta_{y}^{4}\right)u^{n+1}_{ij}=
       \left(\mathcal{I}-\alpha\delta_{2y}^{4}\right)u^{n+\frac{1}{2}}_{ij}-\frac{k}{4}\left(\gamma\delta_{2y}^{4}-\beta\delta_{y}^{4}\right)
       u^{n+\frac{1}{2}}_{ij}+
     \end{equation*}
     \begin{equation}\label{16}
     \frac{k}{4}f_{2}(x_{i},y_{j},t_{n+1},u_{ij}^{n+1},\delta_{y}^{4}u_{ij}^{n+1})+
      \frac{k}{4}f_{2}(x_{i},y_{j},t_{n+\frac{1}{2}},u_{ij}^{n+\frac{1}{2}},\delta_{y}^{4}u_{ij}^{n+\frac{1}{2}})+O(k^{3}+h_{y}^{4}+kh_{y}^{4}).
     \end{equation}
      Tracking the error term $O(k^{3}+h_{y}^{4}+kh_{y}^{4})$ into equation $(\ref{16})$ and replacing the analytical solution $"u"$ with the approximate one $"U"$ to get the second-step of the developed technique, that is,
        \begin{equation*}
       \left(\mathcal{I}-\alpha\delta_{2y}^{4}\right)U^{n+1}_{ij}-\frac{k}{4}\left(\gamma\delta_{2y}^{4}-\beta\delta_{y}^{4}\right)U^{n+1}_{ij}=
       \left(\mathcal{I}-\alpha\delta_{2y}^{4}\right)U^{n+\frac{1}{2}}_{ij}-\frac{k}{4}\left(\gamma\delta_{2y}^{4}-\beta\delta_{y}^{4}\right)
       U^{n+\frac{1}{2}}_{ij}+
     \end{equation*}
     \begin{equation}\label{17}
     4^{-1}kf_{2}(x_{i},y_{j},t_{n+1},U_{ij}^{n+1},\delta_{y}^{4}U_{ij}^{n+1})+
      4^{-1}kf_{2}(x_{i},y_{j},t_{n+\frac{1}{2}},U_{ij}^{n+\frac{1}{2}},\delta_{y}^{4}U_{ij}^{n+\frac{1}{2}}),
     \end{equation}
     where the operators $\delta_{2y}^{4}$ and $\delta_{y}^{4}$ are defined in relation $(\ref{6})$.

     Now, for $n=0$, expanding the Taylor series and using the Mean Value theorem, it is not hard to show that
    \begin{equation*}
       u_{t,ij}^{\frac{1}{2}}+u_{t,ij}^{0}=\frac{4}{k}\left(u^{\frac{1}{2}}_{ij}-u^{0}_{ij}\right)+O(k^{2}).
     \end{equation*}
      Substituting this equation into $(\ref{7})$ and utilizing equations $(\ref{9})$ and $(\ref{10a})$ result in
      \begin{equation*}
       u^{\frac{1}{2}}_{ij}-u^{0}_{ij}-\alpha\left(\delta_{2x}^{4}u^{\frac{1}{2}}_{ij}-\delta_{2x}^{4}u^{0}_{ij}\right)-\frac{\gamma k}{4}\left(\delta_{2x}^{4}u^{\frac{1}{2}}_{ij}+\delta_{2x}^{4}u^{0}_{ij}\right)+\frac{\beta k}{4}\left(\delta_{x}^{4}u^{\frac{1}{2}}_{ij}+\delta_{x}^{4}u^{0}_{ij}\right)=
      \end{equation*}
     \begin{equation}\label{18}
     \frac{k}{4}f_{1}(x_{i},y_{j},t_{\frac{1}{2}},u_{ij}^{\frac{1}{2}},\delta_{x}^{4}u_{ij}^{\frac{1}{2}})+\frac{k}{4}f_{1}(x_{i},y_{j},t_{0},u_{ij}^{0},
     \delta_{x}^{4}u_{ij}^{0})+O(k^{3}+h_{x}^{4}+kh_{x}^{4}).
     \end{equation}
      Truncating the error term $O(k^{3}+h_{x}^{4}+kh_{x}^{4})$, replacing the exact solution $"u"$ with the numerical one $"U"$ and rearranging terms, this gives,
        \begin{equation*}
       \left(\mathcal{I}-\alpha\delta_{2x}^{4}\right)U^{\frac{1}{2}}_{ij}-\frac{k}{4}\left(\gamma\delta_{2x}^{4}-\beta\delta_{x}^{4}\right)
       U^{\frac{1}{2}}_{ij}=\left(\mathcal{I}-\alpha\delta_{2x}^{4}\right)U^{0}_{ij}-\frac{k}{4}\left(\gamma\delta_{2x}^{4}-\beta\delta_{x}^{4}\right)
       U^{0}_{ij}+
     \end{equation*}
     \begin{equation}\label{19}
     4^{-1}kf_{1}(x_{i},y_{j},t_{\frac{1}{2}},U_{ij}^{\frac{1}{2}},\delta_{x}^{4}U_{ij}^{\frac{1}{2}})+
      4^{-1}kf_{1}(x_{i},y_{j},t_{0},U_{ij}^{0},\delta_{x}^{4}U_{ij}^{0}),
     \end{equation}
     A combination of equations $(\ref{19})$, $(\ref{11})$ and $(\ref{17})$ provides a new three-level time split high-order Leapfrog/Crank-Nicolson technique for solving the two-dimensional sobolev and regularized long wave equation $(\ref{1})$ with initial-boundary conditions $(\ref{2})$-$(\ref{3})$. That is, for $i,j=2,3,...,M-2$,
       \begin{equation*}
       \left(\mathcal{I}-\alpha\delta_{2x}^{4}\right)U^{\frac{1}{2}}_{ij}-\frac{k}{4}\left(\gamma\delta_{2x}^{4}-\beta\delta_{x}^{4}\right)
       U^{\frac{1}{2}}_{ij}=\left(\mathcal{I}-\alpha\delta_{2x}^{4}\right)U^{0}_{ij}-\frac{k}{4}\left(\gamma\delta_{2x}^{4}-\beta\delta_{x}^{4}\right)
       U^{0}_{ij}+
     \end{equation*}
     \begin{equation}\label{s1}
     4^{-1}kf_{2}(x_{i},y_{j},t_{\frac{1}{2}},U_{ij}^{\frac{1}{2}},\delta_{x}^{4}U_{ij}^{\frac{1}{2}})+
      4^{-1}kf_{2}(x_{i},y_{j},t_{0},U_{ij}^{0},\delta_{x}^{4}U_{ij}^{0}),
     \end{equation}
     \begin{equation*}
       \left(\mathcal{I}-\alpha\delta_{2y}^{4}\right)U^{1}_{ij}-\frac{k}{4}\left(\gamma\delta_{2y}^{4}-\beta\delta_{y}^{4}\right)U^{1}_{ij}=
       \left(\mathcal{I}-\alpha\delta_{2y}^{4}\right)U^{\frac{1}{2}}_{ij}-\frac{k}{4}\left(\gamma\delta_{2y}^{4}-\beta\delta_{y}^{4}\right)
       U^{\frac{1}{2}}_{ij}+
     \end{equation*}
     \begin{equation}\label{s2}
     4^{-1}kf_{2}(x_{i},y_{j},t_{1},U_{ij}^{1},\delta_{y}^{4}U_{ij}^{1})+
      4^{-1}kf_{2}(x_{i},y_{j},t_{\frac{1}{2}},U_{ij}^{\frac{1}{2}},\delta_{y}^{4}U_{ij}^{\frac{1}{2}}),
     \end{equation}
     and for $n=1,2,...,N-1$,
     \begin{equation}\label{s3}
       \left(\mathcal{I}-\delta_{2x}^{4}\right)U^{n+\frac{1}{2}}_{ij}=\left(\mathcal{I}-\delta_{2x}^{4}\right)U^{n-\frac{1}{2}}_{ij}+
        k\left(\gamma\delta_{2x}^{4}-\beta\delta_{x}^{4}\right)U_{ij}^{n}+kf_{1}\left(x_{i},y_{j},t_{n},U_{ij}^{n},\delta_{x}^{4}U_{ij}^{n}\right),
       \end{equation}
     \begin{equation*}
       \left(\mathcal{I}-\alpha\delta_{2y}^{4}\right)U^{n+1}_{ij}-\frac{k}{4}\left(\gamma\delta_{2y}^{4}-\beta\delta_{y}^{4}\right)U^{n+1}_{ij}=
       \left(\mathcal{I}-\alpha\delta_{2y}^{4}\right)U^{n+\frac{1}{2}}_{ij}-\frac{k}{4}\left(\gamma\delta_{2y}^{4}-\beta\delta_{y}^{4}\right)
       U^{n+\frac{1}{2}}_{ij}+
     \end{equation*}
     \begin{equation}\label{s4}
     4^{-1}kf_{2}(x_{i},y_{j},t_{n+1},U_{ij}^{n+1},\delta_{y}^{4}U_{ij}^{n+1})+
      4^{-1}kf_{2}(x_{i},y_{j},t_{n+\frac{1}{2}},U_{ij}^{n+\frac{1}{2}},\delta_{y}^{4}U_{ij}^{n+\frac{1}{2}}),
     \end{equation}
      with initial and boundary conditions
     \begin{equation}\label{s5}
      U_{ij}^{0}=u_{0,ij},\text{\,\,\,\,}U_{pj}^{n}=g_{pj}^{n},\text{\,\,\,\,}U_{ip}^{n}=g_{ip}^{n},\text{\,\,\,for\,\,\,}p=0,M,
     \text{\,\,\,and\,\,\,}i,j=0,1,2,...,M,
     \end{equation}
     To begin the algorithm, we should set
     \begin{equation*}
     U_{1j}^{n}=U_{0j}^{n},\text{\,\,}U_{M-1,j}^{n}=U_{M,j}^{n},\text{\,\,\,}
     U_{i1}^{n}=U_{i0}^{n},\text{\,\,}U_{i,M-1}^{n}=U_{i,M}^{n},\text{\,\,for\,\,}0\leq i,j\leq M.
     \end{equation*}

     \section{Stability analysis and error estimates of the proposed three-level approach}\label{sec3}
     This section deals with the analysis of stability and error estimates of the constructed technique $(\ref{s1})$-$(\ref{s5})$ for solving the initial-boundary value problem $(\ref{1})$-$(\ref{3})$, under the time step requirement
    \begin{equation}\label{sR}
    k=\min\left\{h_{x}^{4/3},\text{\,\,}h_{y}^{4/3}\right\}.
      \end{equation}
    For the sake of stability analysis and error estimates, we assume that the mesh spaces $h_{x}$ and $h_{y}$ are equal ($h:=h_{x}=h_{y}$) and the analytical solution $"u"$ satisfies the following regularity condition: $u\in L^{\infty}(0,T;H^{2}(\Omega))$. That is, there is a positive constant $\widehat{C}_{0}$ independent of the time step $k$ and the space step $h$, so that
    \begin{equation}\label{RC}
    \||u|\|_{H^{2},\infty}\leq \widehat{C}_{0},
   \end{equation}
   where $\||\cdot|\|_{H^{2},\infty}$ is the discrete norm defined in relation $(\ref{dn})$.

   \begin{theorem}\label{t} (Stability and error estimates).
     Consider $U$ to be the approximate solution obtained from the new algorithm $(\ref{s1})$-$(\ref{s5})$ and let $u$ be the analytical solution of the two-dimensional evolutional sobolev and regularized long wave equation $(\ref{1})$ with initial and boundary conditions $(\ref{2})$ and $(\ref{3})$ , respectively. Under the time step restriction $(\ref{sR})$, the computed solution $U$ satisfies
    \begin{equation}\label{20}
    \||U|\|_{H^{2},\infty}\leq \widehat{C}_{0}+\widehat{C}(k^{2}+h^{8/3}+h^{4}).
   \end{equation}
     Setting: $e^{n+\frac{1}{2}}=u^{n+\frac{1}{2}}-U^{n+\frac{1}{2}}$ and $e^{n+1}=u^{n+1}-U^{n+1}$, be the errors at the time levels $n+\frac{1}{2}$ and $n+1$, respectively, it holds
    \begin{equation}\label{21}
     \||e|\|_{H^{2},\infty}\leq \widetilde{C}(k^{2}+h^{8/3}).
   \end{equation}
    where $\widehat{C}_{0}>0$ is the constant given in estimate $(\ref{RC})$, $\widehat{C}$ and $\widetilde{C}$ are two positive constant that do not dependent on the grid size $h$ and time step $k$.
   \end{theorem}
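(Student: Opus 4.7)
My plan is to establish the error estimate (21) first by a discrete $H^{2}$-energy argument applied to the error equations, and then deduce the stability bound (20) from (21), the regularity assumption (RC), and the triangle inequality $\||U|\|_{H^{2},\infty}\leq \||u|\|_{H^{2},\infty}+\||e|\|_{H^{2},\infty}$. The extra $h^{4}$ contribution in (20) comes from the consistency replacements $\delta_{2x}^{4}\leftrightarrow\partial_{xx}$ and $\delta_{x}^{4}\leftrightarrow\partial_{x}$, both accurate to $O(h^{4})$ by (9). To set up the error equations I would subtract each step of the scheme (s1)--(s4) from the consistent identities (19), (11a) and (17) written for $u$. The resulting equations for $e^{n+1/2}$ and $e^{n+1}$ have the same form as (s1)--(s4), but carry on the right-hand side (i) a local truncation error $\tau^{n}=O(k^{3}+h^{4}+kh^{4})$ and (ii) the nonlinear increment $f_{m}(u,\delta_{z}^{4}u)-f_{m}(U,\delta_{z}^{4}U)$. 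Under the CFL restriction (sR), both $k^{2}$ and $k^{-1}h^{4}$ are $O(h^{8/3})$, which is exactly the rate announced.

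The heart of the argument is the energy analysis. For the leapfrog step (s3), I would test the error equation in the discrete scalar product $(\cdot,\cdot)_{2}$ of (sp) against $e^{n+1/2}+e^{n-1/2}$; discrete summation by parts converts $-(\delta_{2x}^{4}e,e^{n+1/2}+e^{n-1/2})_{2}$ into a positive combination of $\|\delta_{x}e\|_{2}^{2}$ and $(h^{2}/12)\|\delta_{x}^{2}e\|_{2}^{2}$, i.e. exactly the $x$-pieces of the $H^{2}$ norm (dn), and the identity telescopes in $n$. For the Crank--Nicolson step (s4), testing against $e^{n+1}+e^{n+1/2}$ gives the analogous telescoping in the $y$-pieces. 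The convective contributions $\beta(\delta_{x}^{4}e,\cdot)_{2}$ and $\beta(\delta_{y}^{4}e,\cdot)_{2}$ are nearly skew-symmetric thanks to the antisymmetric stencil of $\delta_{x}^{4},\delta_{y}^{4}$, so they vanish or are absorbable by Young's inequality. The nonlinear terms are bounded using the local Lipschitz hypothesis on $f_{m}$; the uniform $L^{\infty}$ bound on $U^{n}$ that this requires is secured by an induction on $n$, combining the two-dimensional discrete Sobolev embedding $\|U^{n}\|_{\infty}\leq C\|U^{n}\|_{H^{2}}$ with the inductive smallness of $\|e^{n}\|_{H^{2}}$.

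The initial steps (s1)--(s2) are handled identically, only with telescoping replaced by a single-step bound that contributes one term of size $k\|\tau^{0}\|_{2}$. Summing the resulting $x$- and $y$-telescoping identities, applying Cauchy--Schwarz and Young's inequality on $\tau^{n}$ and on the Lipschitz differences, and invoking the discrete Gronwall inequality yields $\|e^{n+1}\|_{H^{2}}\leq\widetilde{C}(k^{2}+h^{8/3})$, which is (21); then (20) follows by the triangle inequality and (RC).

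The two steps I expect to be hardest are (a) controlling the boundary/ghost-value contributions in the summation-by-parts identities for $\delta_{x}^{4}$ and $\delta_{2x}^{4}$, since the one-sided extensions $U_{1j}^{n}=U_{0j}^{n}$, etc. imposed after (s5) do not yield clean antisymmetry at the first two layers and must be handled by direct manipulation; and (b) closing the bootstrap, because the Lipschitz estimate for $f_{m}$ ultimately requires bounding $\|\delta_{x}^{4}e\|_{2}$ in terms of $\|e\|_{H^{2}}$ through an inverse inequality whose constant scales as $h^{-1}$, and it is precisely this scaling combined with the CFL choice $k=h^{4/3}$ that produces the sharp exponent $8/3$ rather than the formal $4$.
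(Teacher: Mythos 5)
Your plan follows essentially the same route as the paper: form the error equations by subtracting the scheme from the consistent identities, test them in the discrete inner product to obtain a telescoping $H^{2}$-type energy (the paper tests against $e^{n+\frac{1}{2}}$ and $e^{n+1}$ alone and uses the identity $2(v-w,v)_{2}=\|v\|_{2}^{2}+\|v-w\|_{2}^{2}-\|w\|_{2}^{2}$ rather than your symmetric test functions, but the resulting energy recursion is the same), handle the convective and nonlinear terms via the summation-by-parts lemmas and the Lipschitz bound, close with the half-index discrete Gronwall inequality, and deduce the stability bound from the error bound by the triangle inequality with the regularity assumption. One remark: the $h^{8/3}$ rate comes from $k^{-1}h^{4}$ under $k=h^{4/3}$ exactly as you say in your first paragraph, not from an inverse inequality on $\|\delta_{x}^{4}e\|_{2}$ as suggested at the end; the paper simply absorbs the dependence of $f_{m}$ on its last argument into the Lipschitz constant and never carries out your $L^{\infty}$ bootstrap.
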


    The following Lemmas $\ref{l1}$-$\ref{l3}$ plays an important role in the proof of the main result of this paper (namely, Theorem $\ref{t}$).

      \begin{lemma}\label{l1}
      Suppose $w,v\in\mathcal{C}_{D}^{0,0}$, be two functions defined over the domain $D=\Omega\times[0,T]$ and satisfying: $w(x_{l},y_{j},t)=v(x_{l},y_{j},t)=w(x_{i},y_{l},t)=v(x_{i},y_{l},t)=0$, for $l\in\{0,1,M-1,M\}$, $0\leq i,j\leq M$, and any $t\in[0,T]$. Let $w(x_{i},y_{j},t)=w_{ij}(t)$ and $v(x_{i},y_{j},t)=v_{ij}(t)$, then it holds
     \begin{equation}\label{22}
     \left(\delta_{mz}^{4}w(t),v(t)\right)_{2}\leq C_{3}\|\delta_{z}w(t)\|_{2}\|\delta_{z}v(t)\|_{2},
    \end{equation}
      \begin{equation}\label{22a}
     \left(\delta_{mz}^{4}w(t),v(t)\right)_{2}\leq \frac{1}{2}\left[\|\delta_{z}w(t)\|_{2}^{2}+\|\delta_{z}v(t)\|_{2}^{2}+\frac{h^{2}}{12}
     \left(\|\delta_{z}^{2}w(t)\|_{2}^{2}+\|\delta_{z}^{2}v(t)\|_{2}^{2}\right)\right],
    \end{equation}
     \begin{equation}\label{22aa}
     \left(\delta_{z}^{4}w(t),v(t)\right)_{2}=-\left(\delta_{z}^{4}v(t),w(t)\right)_{2},
    \end{equation}
     for $m=1,2$, and $z=x,y$. $C_{3}$ is a positive constant independent of the mesh size $h$ and the time step $k$ and where we set $\delta_{1z}^{4}=\delta_{z}^{4}$.
   \end{lemma}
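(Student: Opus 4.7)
Plan: dispatch the three conclusions in order. For the skew-symmetry identity (22aa), exploit the fact that the four-point stencil $(-1,8,-8,1)/(12h)$ defining $\delta_z^{4}$ is anti-symmetric about its central node. Since $w$ and $v$ both vanish on the two-layer boundary strip $l\in\{0,1,M-1,M\}$, the four index shifts needed to move each occurrence of $w$ past a summation are all legal and every discrete boundary flux is annihilated; the re-indexed sum is precisely $-(w,\delta_z^{4}v)_2$.

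For the bounds (22) and (22a) in the case $m=2$, the key observation is the algebraic decomposition
\[
\delta_{2z}^{4} \;=\; \delta_z^{2} - \frac{h^{2}}{12}\,(\delta_z^{2})^{2},
\]
which is verified by expanding both sides on the five-point stencil. Substituting into the scalar product and applying discrete integration by parts, $(\delta_z^{2}\phi,\psi)_2=-(\delta_z\phi,\delta_z\psi)_2$, together with the self-adjointness of $\delta_z^{2}$ on this restricted class, yields
\[
(\delta_{2z}^{4}w,v)_2 \;=\; -(\delta_z w,\delta_z v)_2-\tfrac{h^{2}}{12}\,(\delta_z^{2}w,\delta_z^{2}v)_2.
\]
Cauchy--Schwarz on each term and Young's inequality $ab\le(a^2+b^2)/2$ then produce (22a) with the precise constant $1/2$. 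For (22), I would invoke the elementary inverse comparison $\|\delta_z^{2}w\|_2\le 2h^{-1}\|\delta_z w\|_2$, visible directly from $\delta_z^{2}w_i=(\delta_z w_{i+1/2}-\delta_z w_{i-1/2})/h$ and the triangle inequality, which absorbs the explicit factor $h^{2}$ and collapses both terms to a uniform multiple of $\|\delta_z w\|_2\|\delta_z v\|_2$.

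For the case $m=1$, I would rewrite the four-point stencil of $\delta_z^{4}$ as a weighted combination of half-step first differences,
\[
12\,\delta_z^{4}u_i \;=\; \delta_z u_{i+3/2}-7\,\delta_z u_{i+1/2}-7\,\delta_z u_{i-1/2}+\delta_z u_{i-3/2},
\]
which shows pointwise that $\delta_z^{4}u_i$ is a bounded linear combination of nearby values of $\delta_z u$. Summing squares immediately yields the comparison $\|\delta_z^{4}w\|_2\le C\,\|\delta_z w\|_2$. Cauchy--Schwarz then gives $|(\delta_z^{4}w,v)_2|\le C\,\|\delta_z w\|_2\,\|v\|_2$, and the discrete Poincar\'e inequality $\|v\|_2\le C'\,\|\delta_z v\|_2$, legitimate because $v$ vanishes on the boundary strip, upgrades this to (22); a further application of Young's inequality produces (22a).

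The main obstacle throughout is the boundary bookkeeping. Each summation by parts re-indexes across a four- or five-point stencil and shifts up to two nodes past the summation range, and the two-layer vanishing hypothesis imposed on $w$ and $v$ in both coordinate directions is exactly what is needed to annihilate every resulting boundary flux and to validate both the self-adjointness of $\delta_z^{2}$ and the skew-symmetry of $\delta_z^{4}$ on the restricted function class. Once this verification is performed cleanly, the remainder of the argument is routine Cauchy--Schwarz, Young, inverse and Poincar\'e estimates.
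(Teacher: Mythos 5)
Your proposal is correct and takes essentially the same route as the paper: your factorization $\delta_{2z}^{4}=\delta_{z}^{2}-\tfrac{h^{2}}{12}(\delta_{z}^{2})^{2}$ together with discrete summation by parts is precisely the paper's identity $\left(\delta_{2x}^{4}w,v\right)_{2}=-\left(\delta_{x}w,\delta_{x}v\right)_{2}-\tfrac{h^{2}}{12}\left(\delta_{x}^{2}w,\delta_{x}^{2}v\right)_{2}$, your half-step rewriting of $\delta_{z}^{4}$ followed by Cauchy--Schwarz and the discrete Poincar\'e inequality is the paper's treatment of the $m=1$ case, and your anti-symmetry re-indexing for the skew-symmetry identity is the paper's summation-by-parts computation. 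The only cosmetic deviation is that you deduce the bound with constant $C_{3}$ for $m=2$ from the identity plus the inverse estimate $\|\delta_{z}^{2}w\|_{2}\leq 2h^{-1}\|\delta_{z}w\|_{2}$, whereas the paper applies Cauchy--Schwarz directly to the three shifted sums (both give the constant $4/3$); note also that your closing remark that Young's inequality yields the second estimate for $m=1$ produces a constant $\tfrac{2\sqrt{C_{p}}}{3}$ rather than the stated $\tfrac{1}{2}$, but the paper's own proof leaves that case equally unaddressed.
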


       \begin{proof}
       It follows from relation $(\ref{6})$ that
       \begin{equation*}
        \delta_{2x}^{4}w_{ij}(t)=\frac{1}{12h^{2}}\left[-w_{i-2,j}(t)+16w_{i-1,j}(t)-30w_{ij}(t)+16w_{i+1,j}(t)-w_{i+2,j}(t)\right]=
       \end{equation*}
      \begin{equation}\label{23}
       \frac{1}{12h}\left[-(\delta_{x}w_{i-\frac{1}{2},j}(t)-\delta_{x}w_{i-\frac{3}{2},j}(t))+
       14(\delta_{x}w_{i+\frac{1}{2},j}(t)-\delta_{x}w_{i-\frac{1}{2},j}(t))-(\delta_{x}w_{i+\frac{3}{2},j}(t)-\delta_{x}w_{i+\frac{1}{2},j}(t))\right].
       \end{equation}
       Utilizing $(\ref{23})$ and the definition of scalar product defined in $(\ref{sp})$, we obtain
       \begin{equation*}
        \left(\delta_{2x}^{4}w(t),v(t)\right)_{2}=h^{2}\underset{i=2}{\overset{M-2}\sum}\underset{j=2}{\overset{M-2}\sum}
       (\delta_{2x}^{4}w_{ij}(t))v_{ij}(t)=\frac{h}{12}\left\{-\underset{i=2}{\overset{M-2}\sum}\underset{j=2}{\overset{M-2}\sum}
       (\delta_{x}w_{i-\frac{1}{2},j}(t)-\delta_{x}w_{i-\frac{3}{2},j}(t))v_{ij}(t)+\right.
       \end{equation*}
       \begin{equation}\label{24}
        \left.14\underset{i=2}{\overset{M-2}\sum}\underset{j=2}{\overset{M-2}\sum}
       (\delta_{x}w_{i+\frac{1}{2},j}(t)-\delta_{x}w_{i-\frac{1}{2},j}(t))v_{ij}(t)-\underset{i=2}{\overset{M-2}\sum}\underset{j=2}{\overset{M-2}\sum}
       (\delta_{x}w_{i+\frac{3}{2},j}(t)-\delta_{x}w_{i+\frac{1}{2},j}(t))v_{ij}(t)\right\}.
       \end{equation}
       Utilizing the assumption: $w_{lj}(t)=v_{lj}(t)=w_{il}(t)=v_{il}(t)=0$, for any $l=0,1,M-1,M$; $i,j=0,1,2,...,M$, and applying the summation by parts, straightforward computations give
      \begin{equation}\label{25}
        -h\underset{i=2}{\overset{M-2}\sum}\underset{j=2}{\overset{M-2}\sum}(\delta_{x}w_{i-\frac{1}{2},j}(t)-\delta_{x}w_{i-\frac{3}{2},j}(t))
       v_{ij}(t)=h^{2}\underset{i=1}{\overset{M-2}\sum}\underset{j=2}{\overset{M-2}\sum}\delta_{x}w_{i-\frac{1}{2},j}(t)\delta_{x}v_{i+\frac{1}{2},j}(t),
      \end{equation}
      \begin{equation}\label{26}
      14h\underset{i=2}{\overset{M-2}\sum}\underset{j=2}{\overset{M-2}\sum}(\delta_{x}w_{i+\frac{1}{2},j}(t)-\delta_{x}w_{i-\frac{1}{2},j}(t))
     v_{ij}(t)=-14h^{2}\underset{i=1}{\overset{M-2}\sum}\underset{j=2}{\overset{M-2}\sum}\delta_{x}w_{i+\frac{1}{2},j}(t)\delta_{x}v_{i+\frac{1}{2},j}(t),
     \end{equation}
     \begin{equation}\label{27}
     -h\underset{i=2}{\overset{M-2}\sum}\underset{j=2}{\overset{M-2}\sum}(\delta_{x}w_{i+\frac{3}{2},j}(t)-\delta_{x}w_{i+\frac{1}{2},j}(t))
     v_{ij}(t)=\underset{i=1}{\overset{M-2}\sum}\underset{j=2}{\overset{M-2}\sum}\delta_{x}w_{i+\frac{3}{2},j}(t)\delta_{x}v_{i+\frac{1}{2},j}(t).
     \end{equation}
     Substituting equations $(\ref{25})$-$(\ref{27})$ into $(\ref{24})$ to get
     \begin{equation}\label{28}
        \left(\delta_{2x}^{4}w(t),v(t)\right)_{2}=\frac{h^{2}}{12}\left\{\underset{i=1}{\overset{M-2}\sum}\underset{j=2}{\overset{M-2}\sum}
        \delta_{x}w_{i-\frac{1}{2},j}\delta_{x}v_{i+\frac{1}{2},j}-14\underset{i=1}{\overset{M-2}\sum}\underset{j=2}{\overset{M-2}\sum}
        \delta_{x}w_{i+\frac{1}{2},j}\delta_{x}v_{i+\frac{1}{2},j}+\underset{i=1}{\overset{M-2}\sum}\underset{j=2}{\overset{M-2}\sum}
        \delta_{x}w_{i+\frac{3}{2},j}\delta_{x}v_{i+\frac{1}{2},j}\right\}(t).
       \end{equation}
       Using the discrete norm given by $(\ref{dn})$ together with the Cauchy-Schwarz inequality provide
        \begin{equation}\label{29}
        h^{2}\underset{i=1}{\overset{M-2}\sum}\underset{j=2}{\overset{M-2}\sum}\delta_{x}w_{i-\frac{1}{2},j}(t)\delta_{x}v_{i+\frac{1}{2},j}(t)
        \leq \left[\underset{i=1}{\overset{M-2}\sum}\underset{j=2}{\overset{M-2}\sum}
         \left(\delta_{x}w_{i-\frac{1}{2},j}(t)\right)^{2}\right]^{\frac{1}{2}}\left[\underset{i=1}{\overset{M-2}\sum}\underset{j=2}{\overset{M-2}\sum}
         \left(\delta_{x}v_{i+\frac{1}{2},j}(t)\right)^{2}\right]^{\frac{1}{2}}\leq\|\delta_{x}w(t)\|_{2}\|\delta_{x}v(t)\|_{2}.
         \end{equation}
         Analogously
         \begin{equation}\label{30}
        -14h^{2}\underset{i=1}{\overset{M-2}\sum}\underset{j=2}{\overset{M-2}\sum}\delta_{x}w_{i+\frac{1}{2},j}(t)\delta_{x}v_{i+\frac{1}{2},j}(t)
          \leq 14\|\delta_{x}w(t)\|_{2}\|\delta_{x}v(t)\|_{2},
         \end{equation}
         \begin{equation}\label{31}
         h^{2}\underset{i=1}{\overset{M-2}\sum}\underset{j=2}{\overset{M-2}\sum}\delta_{x}w_{i+\frac{3}{2},j}(t)\delta_{x}v_{i+\frac{1}{2},j}(t)
          \leq \|\delta_{x}w(t)\|_{2}\|\delta_{x}v(t)\|_{2}.
         \end{equation}
         Plugging estimates $(\ref{29})$-$(\ref{31})$ and $(\ref{28})$, this results in
         \begin{equation}\label{32}
        \left(\delta_{2x}^{4}w(t),v(t)\right)_{2}\leq \frac{4}{3}\|\delta_{x}w(t)\|_{2}\|\delta_{x}v(t)\|_{2}.
       \end{equation}
       This proves estimate $(\ref{22})$, for $z=x$ and $m=2$. Furthermore, utilizing equation $(\ref{28})$, it is easy to observe that
        \begin{equation}\label{33a}
        \left(\delta_{2x}^{4}w(t),v(t)\right)_{2}=-h^{2}\underset{i=1}{\overset{M-2}\sum}\underset{j=2}{\overset{M-2}\sum}
        \delta_{x}w_{i+\frac{1}{2},j}\delta_{x}v_{i+\frac{1}{2},j}+\frac{h^{2}}{12}\underset{i=1}{\overset{M-2}\sum}\underset{j=2}{\overset{M-2}\sum}
        \left\{\delta_{x}w_{i-\frac{1}{2},j}(t)-2\delta_{x}w_{i+\frac{1}{2},j}(t)+\delta_{x}w_{i+\frac{3}{2},j}(t)\right\}\delta_{x}v_{i+\frac{1}{2},j}(t).
       \end{equation}
       But $\delta_{x}w_{i-\frac{1}{2},j}(t)-2\delta_{x}w_{i+\frac{1}{2},j}(t)+\delta_{x}w_{i+\frac{3}{2},j}(t)=
       h\left(\delta_{x}^{2}w_{i+1,j}(t)-\delta_{x}^{2}w_{ij}(t)\right)$. This fact, together with $(\ref{33a})$ and the scalar product given in $(\ref{sp})$ yield
        \begin{equation*}
        \left(\delta_{2x}^{4}w(t),v(t)\right)_{2}=-\left(\delta_{x}w(t),\delta_{x}v(t)\right)_{2}+\frac{h^{3}}{12}\underset{i=1}{\overset{M-2}\sum}
        \underset{j=2}{\overset{M-2}\sum}\left(\delta_{x}^{2}w_{i+1,j}(t)-\delta_{x}^{2}w_{ij}(t)\right)\delta_{x}v_{i+\frac{1}{2},j}(t).
       \end{equation*}
       Applying the summation by part and utilizing the scalar product $\left(\cdot,\cdot\right)_{2}$ along with the assumption $w_{lj}(t)=w_{il}(t)=0=v_{lj}(t)=v_{il}(t)$, for $l\in\{0,1,M-1,M\}$, $0\leq i,j\leq M$, and for every $t\in[0,T]$, to get
       \begin{equation}\label{33aa}
        \left(\delta_{2x}^{4}w(t),v(t)\right)_{2}=-\left(\delta_{x}w(t),\delta_{x}v(t)\right)_{2}-\frac{h^{2}}{12}
        \left(\delta_{x}^{2}w(t),\delta_{x}^{2}v(t)\right)_{2}.
       \end{equation}
       The application of Cauchy-Schwarz inequality together with estimate $2ab\leq a^{2}+b^{2}$, for every real numbers $a$ and $b$, gives estimate $(\ref{22a})$ for $z=x$.\\

       In a similar way, one easily establishes that
       \begin{equation}\label{33}
        \left(\delta_{2y}^{4}w(t),v(t)\right)_{2}\leq \frac{4}{3}\|\delta_{y}w(t)\|_{2}\|\delta_{y}v(t)\|_{2},
       \end{equation}
       and
       \begin{equation}\label{33aaa}
     \left(\delta_{2y}^{4}w(t),v(t)\right)_{2}\leq \frac{1}{2}\left[\|\delta_{y}w(t)\|_{2}^{2}+\|\delta_{y}v(t)\|_{2}^{2}+\frac{h^{2}}{12}
     \left(\|\delta_{y}^{2}w(t)\|_{2}^{2}+\|\delta_{y}^{2}v(t)\|_{2}^{2}\right)\right].
    \end{equation}
     Utilizing the definition of the linear operator $\delta_{x}^{4}$ defined in relation $(\ref{6})$, it is not hard to see that
     \begin{equation*}
        \delta_{x}^{4}w_{ij}(t)=\frac{1}{12h}\left[-w_{i-2,j}(t)+8w_{i-1,j}(t)-8w_{i+1,j}(t)+w_{i+2,j}(t)\right]=
       \end{equation*}
      \begin{equation*}
       \frac{1}{12}\left[\delta_{x}w_{i-\frac{3}{2},j}(t)-7(\delta_{x}w_{i+\frac{1}{2},j}(t)+\delta_{x}w_{i-\frac{1}{2},j}(t))+
       \delta_{x}w_{i+\frac{3}{2},j}(t)\right].
       \end{equation*}
       So
       \begin{equation}\label{34}
        \left(\delta_{x}^{4}w(t),v(t)\right)_{2}=\frac{h^{2}}{12}\underset{i=2}{\overset{M-2}\sum}\underset{j=2}{\overset{M-2}\sum}\left[
       \delta_{x}w_{i-\frac{3}{2},j}(t)-7(\delta_{x}w_{i+\frac{1}{2},j}(t)+\delta_{x}w_{i-\frac{1}{2},j}(t))+
       \delta_{x}w_{i+\frac{3}{2},j}(t)\right]v_{ij}(t).
       \end{equation}
       Applying the Cauchy-Schwarz and Poincar\'{e}-Friedrich inequalities and using the the hypothesis: $w_{lj}(t)=v_{lj}(t)=w_{il}(t)=v_{il}(t)=0$, for any $l=0,1,M-1,M$; $i,j=0,1,2,...,M$, it is not difficult to observe that
        \begin{equation*}
       h^{2}\underset{i=2}{\overset{M-2}\sum}\underset{j=2}{\overset{M-2}\sum}(\delta_{x}w_{i-\frac{3}{2},j}(t))v_{ij}(t)\leq
       h^{2}\left(\underset{i=2}{\overset{M-2}\sum}\underset{j=2}{\overset{M-2}\sum}(\delta_{x}w_{i-\frac{3}{2},j}(t))^{2}\right)^{\frac{1}{2}}
       \left(\underset{i=2}{\overset{M-2}\sum}\underset{j=2}{\overset{M-2}\sum}(v_{ij}(t))^{2}\right)^{\frac{1}{2}}\leq
       \end{equation*}
       \begin{equation}\label{40}
       \|\delta_{x}w(t)\|_{2}\|v(t)\|_{2}\leq \sqrt{C_{p}}\|\delta_{x}w(t)\|_{2}\|\delta_{x}v(t)\|_{2}.
       \end{equation}
       Analogously,
       \begin{equation}\label{41}
       h^{2}\underset{i=2}{\overset{M-2}\sum}\underset{j=2}{\overset{M-2}\sum}(\delta_{x}w_{i-\frac{1}{2},j}(t))v_{ij}(t)\leq
       \|\delta_{x}w(t)\|_{2}\|v(t)\|_{2}\leq \sqrt{C_{p}}\|\delta_{x}w(t)\|_{2}\|\delta_{x}v(t)\|_{2},
       \end{equation}
       \begin{equation}\label{42}
       h^{2}\underset{i=2}{\overset{M-2}\sum}\underset{j=2}{\overset{M-2}\sum}(\delta_{x}w_{i+\frac{1}{2},j}(t))v_{ij}(t)\leq
       \|\delta_{x}w(t)\|_{2}\|v(t)\|_{2}\leq \sqrt{C_{p}}\|\delta_{x}w(t)\|_{2}\|\delta_{x}v(t)\|_{2},
       \end{equation}
       \begin{equation}\label{43}
       h^{2}\underset{i=2}{\overset{M-2}\sum}\underset{j=2}{\overset{M-2}\sum}(\delta_{x}w_{i+\frac{3}{2},j}(t))v_{ij}(t)\leq
       \|\delta_{x}w(t)\|_{2}\|v(t)\|_{2}\leq \sqrt{C_{p}}\|\delta_{x}w(t)\|_{2}\|\delta_{x}v(t)\|_{2},
       \end{equation}
       where $C_{p}>0$, is a constant independent of the mesh grid $h$ and time step $k$. In the remainder of this paper, we denote by $C_{p}$ be the Poincar\'{e}-Friedrich constant. Substituting estimates $(\ref{40})$-$(\ref{43})$ into equation $(\ref{34})$ provides
       \begin{equation*}
        \left(\delta_{x}^{4}w(t),v(t)\right)_{2}\leq\frac{4\sqrt{C_{p}}}{3}\|\delta_{x}w(t)\|_{2}\|\delta_{x}v(t)\|_{2}.
       \end{equation*}
       Similarly, it is not hard to show that
       \begin{equation*}
        \left(\delta_{y}^{4}w(t),v(t)\right)_{2}\leq\frac{4\sqrt{C_{p}}}{3}\|\delta_{y}w(t)\|_{2}\|\delta_{y}v(t)\|_{2}.
       \end{equation*}
        Furthermore, the application of the summation by parts along with the hypothesis: $w_{lj}(t)=v_{lj}(t)=w_{il}(t)=v_{il}(t)=0$, for $l=0,1,M-1,M$; $i,j=0,1,2,...,M$, and any $t\in[0,T]$, yield
         \begin{equation}\label{35}
     \underset{i=2}{\overset{M-2}\sum}\underset{j=2}{\overset{M-2}\sum}(\delta_{x}w_{i-\frac{3}{2},j}^{q})v_{ij}^{p}=
     -\underset{i=2}{\overset{M-2}\sum}\underset{j=2}{\overset{M-2}\sum}w_{ij}^{q}\delta_{x}v_{i+\frac{3}{2},j}^{p},\text{\,}
     \underset{i=2}{\overset{M-2}\sum}\underset{j=2}{\overset{M-2}\sum}(\delta_{x}w_{i+\frac{3}{2},j}^{q})v_{ij}^{p}=
     -\underset{i=2}{\overset{M-2}\sum}\underset{j=2}{\overset{M-2}\sum}w_{ij}^{q}\delta_{x}v_{i-\frac{3}{2},j}^{p},
     \end{equation}
     \begin{equation}\label{36}
     -\underset{i=2}{\overset{M-2}\sum}\underset{j=2}{\overset{M-2}\sum}(\delta_{x}w_{i-\frac{1}{2},j}(t))v_{ij}(t)=
     \underset{i=2}{\overset{M-2}\sum}\underset{j=2}{\overset{M-2}\sum}w_{ij}(t)\delta_{x}v_{i+\frac{1}{2},j}(t),\text{\,}
     -\underset{i=2}{\overset{M-2}\sum}\underset{j=2}{\overset{M-2}\sum}(\delta_{x}w_{i+\frac{1}{2},j}(t))v_{ij}(t)=
     \underset{i=2}{\overset{M-2}\sum}\underset{j=2}{\overset{M-2}\sum}w_{ij}(t)\delta_{x}v_{i-\frac{1}{2},j}(t),
     \end{equation}
     \begin{equation}\label{37}
     \underset{i=2}{\overset{M-2}\sum}\underset{j=2}{\overset{M-2}\sum}(\delta_{y}w_{i,j-\frac{3}{2}}(t))v_{ij}(t)=
     -\underset{i=2}{\overset{M-2}\sum}\underset{j=2}{\overset{M-3}\sum}w_{ij}(t)\delta_{y}v_{i,j+\frac{3}{2}}(t),\text{\,}
     \underset{i=2}{\overset{M-2}\sum}\underset{j=2}{\overset{M-2}\sum}(\delta_{y}w_{i,j+\frac{3}{2}}(t))v_{ij}(t)=
     -\underset{i=2}{\overset{M-2}\sum}\underset{j=2}{\overset{M-2}\sum}w_{ij}(t)\delta_{y}v_{i,j-\frac{3}{2}}(t),
     \end{equation}
     \begin{equation}\label{38}
     -\underset{i=2}{\overset{M-2}\sum}\underset{j=2}{\overset{M-2}\sum}(\delta_{y}w_{i,j-\frac{1}{2}}(t))v_{ij}(t)=
     \underset{i=2}{\overset{M-2}\sum}\underset{j=2}{\overset{M-2}\sum}w_{ij}(t)\delta_{y}v_{i,j+\frac{1}{2}}(t),\text{\,}
     -\underset{i=2}{\overset{M-2}\sum}\underset{j=2}{\overset{M-2}\sum}(\delta_{y}w_{i,j+\frac{1}{2}}(t))v_{ij}(t)=
     \underset{i=2}{\overset{M-2}\sum}\underset{j=2}{\overset{M-2}\sum}w_{ij}(t)\delta_{y}v_{i,j-\frac{1}{2}}(t).
     \end{equation}
     A combination of equations $(\ref{34})$ and $(\ref{35})$-$(\ref{36})$ gives
    \begin{equation}\label{39}
        \left(\delta_{x}^{4}w(t),v(t)\right)_{2}=\frac{h^{2}}{12}\underset{i=2}{\overset{M-2}\sum}\underset{j=2}{\overset{M-2}\sum}\left[
       -\delta_{x}v_{i+\frac{3}{2},j}(t)+7(\delta_{x}v_{i+\frac{1}{2},j}(t)+\delta_{x}v_{i-\frac{1}{2},j}(t))-
       \delta_{x}v_{i-\frac{3}{2},j}(t)\right]w_{ij}(t)=-\left(\delta_{x}^{4}v(t),w(t)\right)_{2}.
       \end{equation}
       In a similar manner, utilizing the linear operator $\delta_{y}^{4}$ defined in $(\ref{6})$ and equations $(\ref{37})$-$(\ref{38})$, one shows that
       \begin{equation*}
        \left(\delta_{y}^{4}w(t),v(t)\right)_{2}=\frac{h^{2}}{12}\underset{i=2}{\overset{M-2}\sum}\underset{j=2}{\overset{M-2}\sum}\left[
       -\delta_{y}v_{i+\frac{3}{2},j}(t)+7(\delta_{y}v_{i+\frac{1}{2},j}(t)+\delta_{y}v_{i-\frac{1}{2},j}(t))-
       \delta_{y}v_{i-\frac{3}{2},j}(t)\right]w_{ij}(t)=-\left(\delta_{y}^{4}v(t),w(t)\right)_{2}.
       \end{equation*}
    This ends the proof of Lemma $\ref{l1}$.
    \end{proof}

    \begin{lemma}\label{l2}
     Suppose $w\in\mathcal{C}_{D}^{0,0}$, be a function defined on $D=\Omega\times[0,T]$, that satisfies $w_{lj}(t)=w_{il}(t)=0$, for every
       $l=0,1,M-1,M$; $i,j=0,1,...,M$ and $t\in[0,T]$. Letting $w(x_{i},y_{j},t)=w_{ij}(t)$, the following estimates are satisfied:
     \begin{equation}\label{44}
        \left(\delta_{z}^{4}w(t),w(t)\right)_{2}=0,
       \end{equation}
       \begin{equation}\label{45}
        \left(-\delta_{2z}^{4}w(t),w(t)\right)_{2}=\|\delta_{z}w(t)\|_{2}^{2}+\frac{h^{2}}{12}\|\delta_{z}^{2}w(t)\|_{2}^{2},
       \end{equation}
     for every $t\in[0,T]$, where $z=x,y$.
     \end{lemma}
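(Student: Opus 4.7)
\begin{dimostrazione}
The plan is to derive both identities directly by specializing the bilinear identities already established in Lemma \ref{l1}. The key observation is that (44) is an antisymmetry/symmetry argument, while (45) is a substitution into the explicit formula (33aa).

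First, I would handle (44). Setting $v(t) = w(t)$ in identity (22aa) gives
\begin{equation*}
\left(\delta_{z}^{4}w(t),w(t)\right)_{2}=-\left(\delta_{z}^{4}w(t),w(t)\right)_{2},
\end{equation*}
so that $2\left(\delta_{z}^{4}w(t),w(t)\right)_{2}=0$, which yields (44). The hypothesis that $w$ vanishes on the prescribed boundary layer is already subsumed in Lemma \ref{l1}, so no separate verification is needed here; the same computation handles both $z=x$ and $z=y$ simultaneously.

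Next, for (45) with $z=x$, I would substitute $v(t)=w(t)$ into the explicit formula (33aa) obtained in the proof of Lemma \ref{l1}, namely
\begin{equation*}
\left(\delta_{2x}^{4}w(t),w(t)\right)_{2}=-\left(\delta_{x}w(t),\delta_{x}w(t)\right)_{2}-\frac{h^{2}}{12}\left(\delta_{x}^{2}w(t),\delta_{x}^{2}w(t)\right)_{2}.
\end{equation*}
Recognizing the right-hand side as $-\|\delta_{x}w(t)\|_{2}^{2}-\frac{h^{2}}{12}\|\delta_{x}^{2}w(t)\|_{2}^{2}$ via the definitions in (\ref{dn}) and (\ref{sp}), and multiplying by $-1$, produces (45) for $z=x$. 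The case $z=y$ is obtained in the same way from the analogous identity that was noted in Lemma \ref{l1} (the $y$-direction counterpart of (33aa)), which is derived by repeating the summation-by-parts computation with the roles of $i$ and $j$ interchanged.

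I do not anticipate any genuine obstacle here, since both claims are immediate corollaries of the bilinear expressions already produced in Lemma \ref{l1}; the only care required is to keep track of the boundary-vanishing hypothesis on $w$ (needed so that the summation-by-parts underlying (33aa) and (22aa) goes through without boundary contributions), but this is exactly the assumption placed on $w$ in the statement of Lemma \ref{l2}.
\end{dimostrazione}
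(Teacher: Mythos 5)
Your proposal is correct and follows essentially the same route as the paper: the author likewise obtains $(\ref{44})$ by setting $v(t)=w(t)$ in the antisymmetry identity $(\ref{22aa})$ and obtains $(\ref{45})$ by setting $v(t)=w(t)$ in the explicit summation-by-parts formula $(\ref{33aa})$ (and its $y$-direction analogue). Your remark that the boundary-vanishing hypothesis on $w$ is exactly what is needed to invoke those identities from Lemma $\ref{l1}$ is also consistent with the paper's treatment.
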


     \begin{proof}
     In equation $(\ref{22aa})$ of Lemma $\ref{l1}$, replacing $v(t)$ with $w(t)$ to get
     \begin{equation*}
    \left(\delta_{z}^{4}w(t),w(t)\right)_{2}=-\left(\delta_{z}^{4}w(t),w(t)\right)_{2},
     \end{equation*}
      which is equivalent to
     \begin{equation*}
     2\left(\delta_{z}^{4}w(t),w(t)\right)_{2}=0.
     \end{equation*}
     Thus, $\left(\delta_{z}^{4}w(t),w(t)\right)_{2}=0$. This proves equation $(\ref{44})$. Now, replacing $v(t)$ with $w(t)$ in equation $(\ref{33aa})$, this provides
      \begin{equation*}
        \left(\delta_{2x}^{4}w(t),w(t)\right)_{2}=-\left(\delta_{x}w(t),\delta_{x}w(t)\right)_{2}-\frac{h^{2}}{12}
        \left(\delta_{x}^{2}w(t),\delta_{x}^{2}w(t)\right)_{2}.
       \end{equation*}
       This is equivalent to
       \begin{equation*}
        \left(-\delta_{2x}^{4}w(t),w(t)\right)_{2}=\|\delta_{x}w(t)\|_{2}^{2}+\frac{h^{2}}{12}\|\delta_{x}^{2}w(t)\|_{2}^{2}.
       \end{equation*}
       Similarly, it is not difficult to show that
       \begin{equation*}
        \left(-\delta_{2y}^{4}w(t),w(t)\right)_{2}=\|\delta_{y}w(t)\|_{2}^{2}+\frac{h^{2}}{12}\|\delta_{y}^{2}w(t)\|_{2}^{2}.
       \end{equation*}
        We recall that the operators: $\delta_{z}$, $\delta_{z}^{2}$, $\delta_{z}^{4}$ and $\delta_{2z}^{4}$, for $z\in\{x,y\}$, are defined in relation $(\ref{6})$. This completes the proof of Lemma $\ref{l2}$.
        \end{proof}

      \begin{lemma}\label{l3}
       Suppose $\{d_{n}\}_{n}$ and $\{q_{n}\}_{n}$ be sequences of nonnegative real numbers. Let $c_{0}\geq0$, be a constant and $l_{0}$ be a nonnegative rational number which is an integer multiple of $2^{-1}$, that is, $l_{0}=2^{-1}k$, where $k$ is a nonnegative integer. For $\sigma\in\{\frac{1}{2},1\}$, if
      \begin{equation*}
        q_{n+\sigma}\leq c_{0}+\underset{l=l_{0}}{\overset{n+\sigma}\sum}d_{l}q_{l},
      \end{equation*}
       then
      \begin{equation}\label{el3}
        q_{n+\sigma}\leq c_{0}\exp\left(\underset{l=l_{0}}{\overset{n+\sigma}\sum}d_{l}\right),
       \end{equation}
        where the summation index $"l"$ varies in the range: $l=l_{0},l_{0}+\frac{1}{2},l_{0}+1,...,n+\sigma-\frac{1}{2},n+\sigma$.
       \end{lemma}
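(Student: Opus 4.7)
The plan is to prove this discrete Gronwall-type estimate by a standard induction, after first reducing the half-integer indexing to integer indexing. Writing $m = 2(l-l_{0})$ and relabeling $\widehat{q}_{m} := q_{l_{0}+m/2}$, $\widehat{d}_{m} := d_{l_{0}+m/2}$, the hypothesis becomes the classical form $\widehat{q}_{\mathcal{N}} \le c_{0} + \sum_{j=0}^{\mathcal{N}} \widehat{d}_{j}\widehat{q}_{j}$, with $\mathcal{N} := 2(n+\sigma - l_{0})$. The conclusion $(\ref{el3})$ is preserved under this bijective relabeling, so it suffices to treat the integer-indexed version.

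Next I would isolate the diagonal contribution by moving the $j=\mathcal{N}$ term to the left, obtaining $(1-\widehat{d}_{\mathcal{N}})\widehat{q}_{\mathcal{N}} \le c_{0} + \sum_{j=0}^{\mathcal{N}-1}\widehat{d}_{j}\widehat{q}_{j}$, which is legitimate under the implicit assumption $\widehat{d}_{\mathcal{N}}<1$ (consistent with the intended applications, where each $d_{l}$ carries a factor of the time step $k$, so $(\ref{sR})$ keeps it small). The induction then proceeds on $\mathcal{N}$. The base case $\mathcal{N}=0$ is immediate. For the inductive step I would introduce the partial sum $P_{\mathcal{N}} := c_{0} + \sum_{j=0}^{\mathcal{N}-1}\widehat{d}_{j}\widehat{q}_{j}$, observe that $\widehat{q}_{\mathcal{N}} \le P_{\mathcal{N}}/(1-\widehat{d}_{\mathcal{N}})$, and deduce the telescoping recurrence $P_{\mathcal{N}+1}\le P_{\mathcal{N}}/(1-\widehat{d}_{\mathcal{N}})$. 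Iterating yields $P_{\mathcal{N}+1}\le c_{0}\prod_{j=0}^{\mathcal{N}}(1-\widehat{d}_{j})^{-1}$, after which the elementary inequality $(1-x)^{-1}\le e^{x/(1-x)}$ for $0\le x<1$ absorbs the correction into the exponential and gives $\widehat{q}_{\mathcal{N}} \le c_{0}\exp\bigl(\sum_{j=0}^{\mathcal{N}}\widehat{d}_{j}\bigr)$. Translating back to the half-integer indexing recovers $(\ref{el3})$.

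The principal obstacle is the implicit character of the hypothesis: the sum on the right includes the term $d_{n+\sigma}q_{n+\sigma}$, so the stated exponential bound cannot hold without some smallness restriction on $d_{n+\sigma}$ (the choice $d_{n+\sigma}=1$ would give an immediate counterexample, as the left side would then be unconstrained). I would therefore rely on the context of the paper, where each $d_{l}$ is proportional to $k$ and hence strictly less than $1$ under the CFL-type requirement $(\ref{sR})$; with this in force, the reorganization above is valid and the exponential bound follows at the cost of replacing $d_{l}$ by $d_{l}/(1-d_{l})$ in the exponent, which is uniformly comparable to $d_{l}$ and can be absorbed into the generic constants used throughout the paper.
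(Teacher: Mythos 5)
Your proof is correct and follows the same route the paper intends: the paper's own ``proof'' of Lemma \ref{l3} is a one-line remark that the argument is the standard discrete Gronwall induction with the summation index stepping by $2^{-1}$, which is exactly what your relabeling $m=2(l-l_{0})$ makes precise before running the usual telescoping induction. Your observation that the right-hand sum contains the diagonal term $d_{n+\sigma}q_{n+\sigma}$ --- so that the bound requires $d_{n+\sigma}<1$ and the exponent one actually obtains is $\sum_{l}d_{l}/(1-d_{l})$ rather than $\sum_{l}d_{l}$ --- is a genuine point the paper silently ignores; it is harmless in the intended applications (where each $d_{l}=O(k)$ under the restriction \pref{sR}), but strictly speaking the lemma as stated is missing this smallness hypothesis, and your proposal is the more careful of the two arguments for flagging it.
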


    \begin{proof}
     The proof of this Lemma is similar to the well known proof of the Gronwall Lemma discussed in the literature by replacing the integer summation index  with the ones having a step size $2^{-1}$.
    \end{proof}

     With Lemmas $\ref{l1}$-$\ref{l3}$, we are ready to prove the main result of this paper (namely Theorem $\ref{t}$).

    \begin{proof} (of Theorem $\ref{t}$).\\
     Subtracting $(\ref{11})$ and $(\ref{16})$ from $(\ref{11a})$ and $(\ref{17})$, respectively, and utilizing the fact that $h_{x}=h_{y}=h$, simple calculations result in
      \begin{equation*}
       \left(\mathcal{I}-\delta_{2x}^{4}\right)(u^{n+\frac{1}{2}}_{ij}-U^{n+\frac{1}{2}}_{ij})=\left(\mathcal{I}-\delta_{2x}^{4}\right)
       (u^{n-\frac{1}{2}}_{ij}-U^{n-\frac{1}{2}}_{ij})+k\left(\gamma\delta_{2x}^{4}-\beta\delta_{x}^{4}\right)(u^{n}_{ij}-U^{n}_{ij})+
       \end{equation*}
       \begin{equation}\label{45}
       k\left[f_{1}\left(x_{i},y_{j},t_{n},u_{ij}^{n},\delta_{x}^{4}u_{ij}^{n}\right)-
       f_{1}\left(x_{i},y_{j},t_{n},U_{ij}^{n},\delta_{x}^{4}U_{ij}^{n}\right)\right]+O(k^{3}+h^{4}+kh^{4}).
       \end{equation}
      \begin{equation*}
       \left(\mathcal{I}-\alpha\delta_{2y}^{4}\right)(u^{n+1}_{ij}-U^{n+1}_{ij})-\frac{k}{4}\left(\gamma\delta_{2y}^{4}-\beta\delta_{y}^{4}\right)
       (u^{n+1}_{ij}-U^{n+1}_{ij})=\left(\mathcal{I}-\alpha\delta_{2y}^{4}\right)(u^{n+\frac{1}{2}}_{ij}-U^{n+\frac{1}{2}}_{ij})-
     \end{equation*}
     \begin{equation*}
      \frac{k}{4}\left(\gamma\delta_{2y}^{4}-\beta\delta_{y}^{4}\right)(u^{n+\frac{1}{2}}_{ij}-U^{n+\frac{1}{2}}_{ij}) +4^{-1}k\left[f_{2}(x_{i},y_{j},t_{n+1},u_{ij}^{n+1},\delta_{y}^{4}u_{ij}^{n+1})
     -f_{2}(x_{i},y_{j},t_{n+1},U_{ij}^{n+1},\delta_{y}^{4}U_{ij}^{n+1})\right]+
     \end{equation*}
     \begin{equation}\label{46}
     4^{-1}k\left[f_{2}(x_{i},y_{j},t_{n+\frac{1}{2}},u_{ij}^{n+\frac{1}{2}},\delta_{y}^{4}u_{ij}^{n+\frac{1}{2}})-
     f_{2}(x_{i},y_{j},t_{n+\frac{1}{2}},U_{ij}^{n+\frac{1}{2}},\delta_{y}^{4}U_{ij}^{n+\frac{1}{2}})\right]+O(k^{3}+h^{4}+kh^{4}).
     \end{equation}
     Let $e^{n+\frac{1}{2}}=u^{n+\frac{1}{2}}-U^{n+\frac{1}{2}}$, be the temporary error obtained at the time level $n+\frac{1}{2}$ and let $e^{n+1}=u^{n+1}-U^{n+1}$, be the error provided by the proposed formulation $(\ref{s1})$-$(\ref{s5})$ at the discrete time $t_{n+1}$. This fact combined with equations $(\ref{45})$ and $(\ref{46})$, after rearranging terms give
      \begin{equation*}
       \left(\mathcal{I}-\delta_{2x}^{4}\right)e^{n+\frac{1}{2}}_{ij}=\left(\mathcal{I}-\delta_{2x}^{4}\right)
       e^{n-\frac{1}{2}}_{ij}+k\left(\gamma\delta_{2x}^{4}-\beta\delta_{x}^{4}\right)e^{n}_{ij}+
       k\left[f_{1}\left(x_{i},y_{j},t_{n},u_{ij}^{n},\delta_{x}^{4}u_{ij}^{n}\right)-\right.
       \end{equation*}
       \begin{equation}\label{47}
       \left.f_{1}\left(x_{i},y_{j},t_{n},U_{ij}^{n},\delta_{x}^{4}U_{ij}^{n}\right)\right]+O(k^{3}+h^{4}+kh^{4}),
       \end{equation}
      \begin{equation*}
       \left(\mathcal{I}-\alpha\delta_{2y}^{4}\right)e^{n+1}_{ij}-\frac{k}{4}\left(\gamma\delta_{2y}^{4}-\beta\delta_{y}^{4}\right)
       e^{n+1}_{ij}=\left(\mathcal{I}-\alpha\delta_{2y}^{4}\right)e^{n+\frac{1}{2}}_{ij}-\frac{k}{4}\left(\gamma\delta_{2y}^{4}-
       \beta\delta_{y}^{4}\right)e^{n+\frac{1}{2}}_{ij}+
     \end{equation*}
     \begin{equation*}
      4^{-1}k\left[f_{2}(x_{i},y_{j},t_{n+1},u_{ij}^{n+1},\delta_{y}^{4}u_{ij}^{n+1})-
      f_{2}(x_{i},y_{j},t_{n+1},U_{ij}^{n+1},\delta_{y}^{4}U_{ij}^{n+1})\right]+
     \end{equation*}
     \begin{equation}\label{48}
     4^{-1}k\left[f_{2}(x_{i},y_{j},t_{n+\frac{1}{2}},u_{ij}^{n+\frac{1}{2}},\delta_{y}^{4}u_{ij}^{n+\frac{1}{2}})-
     f_{2}(x_{i},y_{j},t_{n+\frac{1}{2}},U_{ij}^{n+\frac{1}{2}},\delta_{y}^{4}U_{ij}^{n+\frac{1}{2}})\right]+O(k^{3}+h^{4}+kh^{4}).
     \end{equation}
      Multiplying both sides of equations $(\ref{47})$ and $(\ref{48})$ by $h^{2}e^{n+\frac{1}{2}}_{ij}$ and $h^{2}e^{n+1}_{ij}$, respectively, summing up from $i,j=2,3,...,M-2$, and utilizing the definition of the scalar product and the $L^{2}$-norm defined in relations $(\ref{sp})$ and $(\ref{dn})$, respectively, we obtain
      \begin{equation*}
     \left(e^{n+\frac{1}{2}}-e^{n-\frac{1}{2}},e^{n+\frac{1}{2}}\right)_{2}+\alpha\left(-\delta_{2x}^{4}e^{n+\frac{1}{2}},e^{n+\frac{1}{2}}\right)_{2}=
     -\alpha\left(\delta_{2x}^{4}e^{n-\frac{1}{2}},e^{n+\frac{1}{2}}\right)_{2}+k\left[\gamma\left(\delta_{2x}^{4}e^{n},e^{n+\frac{1}{2}}\right)_{2}-
     \beta\left(\delta_{x}^{4}e^{n},e^{n+\frac{1}{2}}\right)_{2}\right]
     \end{equation*}
      \begin{equation}\label{49}
     +kh^{2}\underset{i=2}{\overset{M-2}\sum}\underset{j=2}{\overset{M-2}\sum}\left[f_{1}\left(x_{i},y_{j},t_{n},u_{ij}^{n},\delta_{x}^{4}u_{ij}^{n}\right)-
     f_{1}\left(x_{i},y_{j},t_{n},U_{ij}^{n},\delta_{x}^{4}U_{ij}^{n}\right)\right]e^{n+\frac{1}{2}}_{ij}+\left(\mathcal{O}(k^{3}+h^{4}+kh^{4}),
     e^{n+\frac{1}{2}}\right)_{2},
     \end{equation}
      \begin{equation*}
     \left(e^{n+1}-e^{n+\frac{1}{2}},e^{n+1}\right)_{2}+\alpha\left(-\delta_{2y}^{4}e^{n+1},e^{n+1}\right)_{2}+\frac{k}{4}\left[\gamma
     \left(-\delta_{2y}^{4}e^{n+1},e^{n+1}\right)_{2}+\beta\left(\delta_{y}^{4}e^{n+1},e^{n+1}\right)_{2}\right]=
     \alpha\left(-\delta_{2y}^{4}e^{n+\frac{1}{2}},e^{n+1}\right)_{2}+
     \end{equation*}
      \begin{equation*}
      \frac{k}{4}\left[\gamma\left(-\delta_{2y}^{4}e^{n+\frac{1}{2}},e^{n+1}\right)_{2}+
     \beta\left(\delta_{y}^{4}e^{n+\frac{1}{2}},e^{n+1}\right)_{2}\right]
     +\frac{kh^{2}}{4}\underset{i=2}{\overset{M-2}\sum}\underset{j=2}{\overset{M-2}\sum}\left\{\left[f_{2}\left(x_{i},y_{j},t_{n+1},u_{ij}^{n+1},
     \delta_{y}^{4}u_{ij}^{n+1}\right)-\right.\right.
     \end{equation*}
      \begin{equation*}
     \left. f_{2}\left(x_{i},y_{j},t_{n+1},U_{ij}^{n+1},\delta_{x}^{4}U_{ij}^{n+1}\right)\right]e^{n+1}_{ij}+
     \left[f_{2}\left(x_{i},y_{j},t_{n+\frac{1}{2}},u_{ij}^{n+\frac{1}{2}}, \delta_{y}^{4}u_{ij}^{n+\frac{1}{2}}\right)-\right.
     \end{equation*}
      \begin{equation}\label{50}
     \left.\left.f_{2}\left(x_{i},y_{j},t_{n+\frac{1}{2}},U_{ij}^{n+\frac{1}{2}},\delta_{x}^{4}U_{ij}^{n+\frac{1}{2}}\right)\right]e^{n+1}_{ij}\right\}
     +\left(\mathcal{O}(k^{3}+h^{4}+kh^{4}),e^{n+\frac{1}{2}}\right)_{2},
     \end{equation}
     where $\mathcal{O}(k^{3}+h^{4}+kh^{4})=(O(k^{3}+h^{4}+kh^{4}),...,O(k^{3}+h^{4}+kh^{4}))$. Since the nonlinear functions $f_{l}$, $l=1,2$, satisfy the Lipschitz condition with respect to its fourth variable $"u"$, there are positive constants $C_{1L}$ and $C_{2L}$, which do not depend on the time step $k$ and the space step $h$, so that
      \begin{equation}\label{51}
       \left|f_{l}\left(x_{i},y_{j},t_{q},u_{ij}^{q},\delta_{z}^{4}u_{ij}^{q}\right)-
       f_{l}\left(x_{i},y_{j},t_{q},U_{ij}^{q},\delta_{z}^{4}U_{ij}^{q}\right)\right|\leq C_{lL}|u_{ij}^{q}-U_{ij}^{q}|=C_{lL}|e_{ij}^{q}|,
     \end{equation}
     for $z\in\{x,y\}$, $l=1,2$, and $q\in\{n,n+\frac{1}{2},n+1\}$. Plugging estimates $(\ref{49})$-$(\ref{51})$ together with estimates $(\ref{22})$ and $(\ref{22a})$ of Lemma $\ref{l1}$, equations $(\ref{44})$-$(\ref{45})$ of Lemma $\ref{l2}$ and multiplying the resulting relations by $2$, this provides
     \begin{equation*}
     2\left(e^{n+\frac{1}{2}}-e^{n-\frac{1}{2}},e^{n+\frac{1}{2}}\right)_{2}+2\alpha\|\delta_{x}e^{n+\frac{1}{2}}\|_{2}^{2}+\frac{\alpha h^{2}}{6}\|\delta_{x}^{2}e^{n+\frac{1}{2}}\|_{2}^{2}\leq \alpha\left[\|\delta_{x}e^{n-\frac{1}{2}}\|_{2}^{2}+
     \|\delta_{x}e^{n+\frac{1}{2}}\|_{2}^{2}+\frac{h^{2}}{6}\left(\|\delta_{x}^{2}e^{n-\frac{1}{2}}\|_{2}^{2}+\right.\right.
     \end{equation*}
      \begin{equation}\label{52}
      \left.\left.\|\delta_{x}^{2}e^{n+\frac{1}{2}}\|_{2}^{2}\right)\right]+2C_{3}k(\beta+\gamma)\|\delta_{x}e^{n}\|_{2}\|e^{n+\frac{1}{2}}\|_{2}
     +2C_{1L}kh^{2}\underset{i=2}{\overset{M-2}\sum}\underset{j=2}{\overset{M-2}\sum}|e^{n}_{ij}||e^{n+\frac{1}{2}}_{ij}|+
     \left(\mathcal{O}(k^{3}+h^{4}+kh^{4}),e^{n+\frac{1}{2}}\right)_{2},
     \end{equation}
     \begin{equation*}
     2\left(e^{n+1}-e^{n+\frac{1}{2}},e^{n+1}\right)_{2}+2\alpha\left[\|\delta_{y}e^{n+1}\|_{2}^{2}+\frac{ h^{2}}{12}\|\delta_{y}^{2}e^{n+1}\|_{2}^{2}\right]+\frac{\gamma k}{4}\left[2\|\delta_{y}e^{n+1}\|_{2}^{2}+\frac{ h^{2}}{6}\|\delta_{y}^{2}e^{n+1}\|_{2}^{2}\right] \leq
     \end{equation*}
     \begin{equation*}
      \alpha\left[\|\delta_{y}e^{n+\frac{1}{2}}\|_{2}^{2}+\|\delta_{y}e^{n+1}\|_{2}^{2}+\frac{h^{2}}{12}\left(\|\delta_{y}^{2}e^{n+\frac{1}{2}}\|_{2}^{2}
     +\|\delta_{y}^{2}e^{n+1}\|_{2}^{2}\right)\right]+\frac{C_{3}k(\beta+\gamma)}{2}\|\delta_{y}e^{n+\frac{1}{2}}\|_{2}\|e^{n+1}\|_{2}+
     \end{equation*}
      \begin{equation}\label{53}
      \frac{C_{2L}kh^{2}}{2}\underset{i=2}{\overset{M-2}\sum}\underset{j=2}{\overset{M-2}\sum}\left((e^{n+1})^{2}+|e^{n+\frac{1}{2}}|
     |e^{n+1}_{ij}|\right)+\left(\mathcal{O}(k^{3}+h^{4}+kh^{4}),e^{n+1}\right)_{2}.
     \end{equation}
      Since $\delta_{x}(\delta_{x}e^{q}_{i+\frac{1}{2},j})=\delta_{x}^{2}e^{q}_{i+1,j}$ and $\delta_{y}(\delta_{y}e^{q}_{i,j+\frac{1}{2}})=\delta_{x}^{2}e^{q}_{i,j+1}$, applying the identity: $2(v-w,v)_{2}=\|v\|_{2}^{2}+\|v-w\|_{2}^{2}-\|w\|_{2}^{2}$, along with the the Cauchy-Schwarz and Poincar\'{e}-Friedrich inequalities, direct calculations give
     \begin{equation}\label{54}
     2\left(e^{n+\frac{1}{2}}-e^{n-\frac{1}{2}},e^{n+\frac{1}{2}}\right)_{2}=\|e^{n+\frac{1}{2}}\|_{2}^{2}+\|e^{n+\frac{1}{2}}-e^{n-\frac{1}{2}}\|_{2}^{2}
     -\|e^{n-\frac{1}{2}}\|_{2}^{2},
      \end{equation}
      \begin{equation}\label{55}
     2C_{3}(\beta+\gamma)k\|\delta_{x}e^{n}\|_{2}\|\delta_{x}e^{n+\frac{1}{2}}\|_{2}\leq
     12k\|\delta_{x}e^{n}\|_{2}^{2}+\frac{C_{3}^{2}C_{p}(\beta+\gamma)^{2}k}{12}\|\delta_{x}e^{n+\frac{1}{2}}\|_{2}^{2},
      \end{equation}
      \begin{equation}\label{56}
     2C_{1L}kh^{2}\underset{i=2}{\overset{M-2}\sum}\underset{j=2}{\overset{M-2}\sum}|e^{n}_{ij}||e^{n+\frac{1}{2}}_{ij}|\leq 2C_{1L}k\|e^{n}\|_{2}\|e^{n+\frac{1}{2}}\|_{2}\leq 2C_{1L}C_{p}k\|\delta_{x}e^{n}\|_{2}\|\delta_{x}e^{n+\frac{1}{2}}\|_{2}\leq
       12k\|\delta_{x}e^{n}\|_{2}^{2}+\frac{C_{1L}^{2}C_{p}^{2}}{12}\|\delta_{x}e^{n+\frac{1}{2}}\|_{2}^{2}.
      \end{equation}
      The first estimate comes from the Cauchy-Schwarz inequality while the second and third ones follow from the Poincar\'{e}-Friedrich inequality and  estimate $2ab\leq a^{2}+b^{2}$, respectively.
      \begin{equation*}
       \left(\mathcal{O}(k^{3}+h^{4}+kh^{4}),e^{n+\frac{1}{2}}\right)_{2}\leq 2C_{4}(k^{3}+h^{4}+kh^{4})\|e^{n+\frac{1}{2}}\|_{2}\leq 2C_{4}\sqrt{C_{p}}(k^{3}+h^{4}+kh^{4})\|\delta_{x}e^{n+\frac{1}{2}}\|_{2}\leq
      \end{equation*}
      \begin{equation}\label{57}
       \frac{C_{p}k}{12}\|\delta_{x}e^{n+\frac{1}{2}}\|_{2}^{2}+12C_{4}^{2}k(k^{2}+k^{-1}h^{4}+h^{4})^{2},
      \end{equation}
      where $C_{4}$ is a positive constant independent of $h$ and $k$.
     \begin{equation}\label{58}
      2\left(e^{n+1}-e^{n+\frac{1}{2}},e^{n+1}\right)_{2}=\|e^{n+1}\|_{2}^{2}+\|e^{n+1}-e^{n+\frac{1}{2}}\|_{2}^{2}-\|e^{n+\frac{1}{2}}\|_{2}^{2},
     \end{equation}
     \begin{equation}\label{59}
     \frac{C_{3}(\beta+\gamma)k}{2}\|\delta_{y}e^{n+\frac{1}{2}}\|_{2}\|\delta_{y}e^{n+1}\|_{2}\leq \frac{C_{3}(\beta+\gamma)k}{4}
     \left[\|\delta_{y}e^{n+\frac{1}{2}}\|_{2}^{2}+\|\delta_{y}e^{n+1}\|_{2}^{2}\right],
      \end{equation}
      \begin{equation*}
      \frac{C_{2L}kh^{2}}{2}\underset{i=2}{\overset{M-2}\sum}\underset{j=2}{\overset{M-2}\sum}\left[(e^{n+1}_{ij})^{2}+|e^{n+\frac{1}{2}}_{ij}|
      |e^{n+1}_{ij}|\right]\leq \frac{C_{2L}k}{2}\left[\|e^{n+1}\|_{2}^{2}+\|e^{n+\frac{1}{2}}\|_{2}
      \|e^{n+1}\|_{2}\right]\leq \frac{C_{2L}C_{p}k}{2}\left[\|\delta_{y}e^{n+1}\|_{2}^{2}+\right.
      \end{equation*}
      \begin{equation}\label{60}
       \left.\|\delta_{y}e^{n+\frac{1}{2}}\|_{2}\|\delta_{y}e^{n+1}\|_{2}\right] \leq \frac{C_{2L}C_{p}k}{2}\left[3\|\delta_{y}e^{n+1}\|_{2}^{2}+
       \|\delta_{y}e^{n+\frac{1}{2}}\|_{2}^{2}\right],
      \end{equation}
      \begin{equation*}
       \left(\mathcal{O}(k^{3}+h^{4}+kh^{4}),e^{n+1}\right)_{2}\leq C_{4}(k^{3}+h^{4}+kh^{4})\|e^{n+1}\|_{2}\leq C_{4}\sqrt{C_{p}}(k^{3}+h^{4}+kh^{4})\|\delta_{y}e^{n+1}\|_{2}\leq
      \end{equation*}
      \begin{equation}\label{61}
       k\|\delta_{y}e^{n+1}\|_{2}^{2}+\frac{C_{4}^{2}C_{p}k}{4}(k^{2}+k^{-1}h^{4}+h^{4})^{2}.
      \end{equation}

     A combination of $(\ref{52})$ and $(\ref{54})$-$(\ref{57})$ results in
     \begin{equation*}
     \|e^{n+\frac{1}{2}}\|_{2}^{2}+\alpha\left(\|\delta_{x}e^{n+\frac{1}{2}}\|_{2}^{2}+\frac{h^{2}}{12}\|\delta_{x}^{2}e^{n+\frac{1}{2}}\|_{2}^{2}\right)+
     \|e^{n+\frac{1}{2}}-e^{n}\|_{2}^{2}\leq \|e^{n-\frac{1}{2}}\|_{2}^{2}+\alpha\left(\|\delta_{x}e^{n-\frac{1}{2}}\|_{2}^{2}
     +\frac{h^{2}}{12}\|\delta_{x}^{2}e^{n-\frac{1}{2}}\|_{2}^{2}\right)+
     \end{equation*}
     \begin{equation}\label{61aa}
      24k\|\delta_{x}e^{n}\|_{2}^{2}+\frac{1}{12}\left(C_{3}^{2}(\beta+\gamma)+C_{1L}^{2}C_{p}^{2}+C_{p}\right)k
      \|\delta_{x}e^{n+\frac{1}{2}}\|_{2}^{2}+12C_{4}^{2}k(k^{2}+k^{-1}h^{4}+h^{4})^{2}.
     \end{equation}
     Setting
     \begin{equation}\label{61a}
     E_{(z)}^{q}=\|e^{q}\|_{2}^{2}+\alpha\left(\|\delta_{z}e^{q}\|_{2}^{2}+\frac{h^{2}}{12}\|\delta_{z}^{2}e^{q}\|_{2}^{2}\right),
     \end{equation}
     where $z=x,y$. Since $0<\alpha\leq1$, for $z=x$, a combination of $(\ref{61a})$ and $(\ref{61aa})$ implies
     \begin{equation}\label{62}
      E_{(x)}^{n+\frac{1}{2}}+\|e^{n+\frac{1}{2}}-e^{n}\|_{2}^{2}\leq E_{(x)}^{n-\frac{1}{2}}+C_{\alpha}k\left(E_{(x)}^{n}+E_{(x)}^{n+\frac{1}{2}}\right)+12C_{4}^{2}k(k^{2}+k^{-1}h^{4}+h^{4})^{2},
     \end{equation}
     where $C_{\alpha}=\alpha^{-1}\max\{24,\frac{1}{12}[C_{3}^{2}(\beta+\gamma)+C_{1L}^{2}C_{p}^{2}+C_{p}]C_{p}\}$. But estimate $(\ref{62})$ holds for all $1\leq p\leq n$. Summing up this estimate from $p=1,2,...,n$, provides
     \begin{equation*}
      E_{(x)}^{n+\frac{1}{2}}+\underset{p=1}{\overset{n}\sum}\|e^{p+\frac{1}{2}}-e^{p}\|_{2}^{2} \leq E_{(x)}^{\frac{1}{2}}+C_{\alpha}k\underset{p=1}{\overset{n}\sum}\left(E_{(x)}^{p}+E_{(x)}^{p+\frac{1}{2}}\right)+12C_{4}^{2}
      kn(k^{2}+k^{-1}h^{4}+h^{4})^{2}.
     \end{equation*}
     Since $kn\leq kN=T$, introducing a summation index $l$ which varies in the range: $l=1,\frac{3}{2},2,...,n,n+\frac{1}{2}$, it is not difficult to observe that this estimate implies
     \begin{equation*}
      E_{(x)}^{n+\frac{1}{2}}\leq E_{(x)}^{\frac{1}{2}}+C_{\alpha}k\underset{l=1}{\overset{n+\frac{1}{2}}\sum}E_{(x)}^{l}+12C_{4}^{2}
      T(k^{2}+k^{-1}h^{4}+h^{4})^{2}.
     \end{equation*}
     Utilizing estimate $(\ref{el3})$ given in Lemma $\ref{l3}$, this becomes
     \begin{equation*}
      E_{(x)}^{n+\frac{1}{2}}\leq \left(E_{(x)}^{\frac{1}{2}}+12C_{4}^{2}T(k^{2}+k^{-1}h^{4}+h^{4})^{2}\right)\exp\left(\underset{l=1}
      {\overset{n+\frac{1}{2}}\sum}C_{\alpha}k\right)\leq \left(E_{(x)}^{\frac{1}{2}}+12C_{4}^{2}T(k^{2}+k^{-1}h^{4}+h^{4})^{2}\right)
      \exp\left(2nC_{\alpha}k\right).
     \end{equation*}
     Because $n=1,2,...,N-1$, and $k=\frac{T}{N}$, so $2nk\leq 2Nk=2T$. Thus
     \begin{equation}\label{63}
      E_{(x)}^{n+\frac{1}{2}}\leq e^{2C_{\alpha}T}\left(E_{(x)}^{\frac{1}{2}}+12C_{4}^{2}T(k^{2}+k^{-1}h^{4}+h^{4})^{2}\right).
     \end{equation}
     Now, subtracting equation $(\ref{s1})$ from $(\ref{18})$ and using the initial condition $(\ref{s5})$, this results in
      \begin{equation*}
      \left(\mathcal{I}-\delta_{2x}^{4}\right)e^{\frac{1}{2}}_{ij}-\frac{k}{4}\left(\gamma\delta_{2x}^{4}-\beta\delta_{x}^{4}\right)e^{\frac{1}{2}}_{ij}
      =\frac{k}{4}\left[f_{1}(x_{i},y_{j},t_{\frac{1}{2}},u_{ij}^{\frac{1}{2}},\delta_{x}^{4}u_{ij}^{\frac{1}{2}})-f_{1}(x_{i},y_{j},t_{\frac{1}{2}},
      U_{ij}^{\frac{1}{2}},\delta_{x}^{4}U_{ij}^{\frac{1}{2}})\right]+O(k^{3}+h^{4}+kh^{4}).
     \end{equation*}
     Multiplying side by side this equation by $2h^{2}e^{\frac{1}{2}}_{ij}$, summing up from $i,j=2,3,...,M-2$, utilizing Lemma $\ref{l2}$ together with estimate $(\ref{51})$ and rearranging terms to obtain
     \begin{equation}\label{64}
      \|e^{\frac{1}{2}}\|_{2}^{2}+\alpha\left(\|\delta_{x}e^{\frac{1}{2}}\|_{2}^{2}+\frac{h^{2}}{12}\|\delta_{x}^{2}e^{\frac{1}{2}}\|_{2}^{2}\right)
      +\frac{\gamma k}{4}\left(\|\delta_{x}e^{\frac{1}{2}}\|_{2}^{2}+\frac{h^{2}}{12}\|\delta_{x}^{2}e^{\frac{1}{2}}\|_{2}^{2}\right)\leq\frac{C_{1L}k}{4}
      \|e^{\frac{1}{2}}\|_{2}^{2}+C_{5}k(k^{2}+k^{-1}h^{4}+h^{4})\|e^{\frac{1}{2}}\|_{2},
     \end{equation}
     where $C_{5}>0$ is a constant that does not depend on the time step $k$ and mesh size $h$. But it is easy to see that
     \begin{equation*}
     C_{5}k(k^{2}+k^{-1}h^{4}+h^{4})\|e^{\frac{1}{2}}\|_{2}\leq \frac{k}{4}\|e^{\frac{1}{2}}\|_{2}^{2}+C_{5}^{2}k(k^{2}+k^{-1}h^{4}+h^{4})^{2}.
     \end{equation*}
     Substituting this into estimate $(\ref{64})$ yields
     \begin{equation*}
     \|e^{\frac{1}{2}}\|_{2}^{2}+\alpha\left(\|\delta_{x}e^{\frac{1}{2}}\|_{2}^{2}+\frac{h^{2}}{12}\|\delta_{x}^{2}e^{\frac{1}{2}}\|_{2}^{2}\right)
      +\frac{\gamma k}{4}\left(\|\delta_{x}e^{\frac{1}{2}}\|_{2}^{2}+\frac{h^{2}}{12}\|\delta_{x}^{2}e^{\frac{1}{2}}\|_{2}^{2}\right)
      \leq\frac{(1+C_{1L})k}{4}\|e^{\frac{1}{2}}\|_{2}^{2}+C_{5}^{2}k(k^{2}+k^{-1}h^{4}+h^{4})^{2},
     \end{equation*}
     which implies
     \begin{equation*}
     E_{(x)}^{\frac{1}{2}} \leq\frac{(1+C_{1L})k}{4}E_{(x)}^{\frac{1}{2}}+C_{5}^{2}k(k^{2}+k^{-1}h^{4}+h^{4})^{2}.
     \end{equation*}
     Apply estimate $(\ref{el3})$ of Lemma $\ref{l3}$ to get
     \begin{equation*}
     E_{(x)}^{\frac{1}{2}} \leq C_{5}^{2}k(k^{2}+k^{-1}h^{4}+h^{4})^{2}\exp\left(\frac{(1+C_{1L})k}{4}\right).
     \end{equation*}
     Since $0<k\leq1$, this becomes
     \begin{equation}\label{65}
     E_{(x)}^{\frac{1}{2}} \leq C_{5}^{2}\exp\left(\frac{(1+C_{1L})}{4}\right)(k^{2}+k^{-1}h^{4}+h^{4})^{2}.
     \end{equation}
     A combination of relations $(\ref{65})$ and $(\ref{63})$ gives
     \begin{equation}\label{66}
      E_{(x)}^{n+\frac{1}{2}}\leq e^{2C_{\alpha}T}\left(C_{5}^{2}\exp\left(\frac{(1+C_{1L})}{4}\right)+12C_{4}^{2}T\right)(k^{2}+k^{-1}h^{4}+h^{4})^{2}.
     \end{equation}
     Plugging estimates $(\ref{53})$ and $(\ref{58})$-$(\ref{61})$, straightforward computations provide
     \begin{equation*}
     \|e^{n+1}\|_{2}^{2}+2\alpha\left(\|\delta_{y}e^{n+1}\|_{2}^{2}+\frac{h^{2}}{12}\|\delta_{y}^{2}e^{n+1}\|_{2}^{2}\right)+
     \frac{\gamma k}{2}\left(\|\delta_{y}e^{n+1}\|_{2}^{2}+\frac{h^{2}}{12}\|\delta_{y}^{2}e^{n+1}\|_{2}^{2}\right)+
     \|e^{n+1}-e^{n+\frac{1}{2}}\|_{2}^{2}\leq \|e^{n+\frac{1}{2}}\|_{2}^{2}+
     \end{equation*}
     \begin{equation*}
     \alpha\left[\|\delta_{y}e^{n+\frac{1}{2}}\|_{2}^{2}+\|\delta_{y}^{2}e^{n+1}\|_{2}^{2}+\frac{h^{2}}{12}
     \left(\|\delta_{y}^{2}e^{n+\frac{1}{2}}\|_{2}^{2}+\|\delta_{y}^{2}e^{n+1}\|_{2}^{2}\right)\right]+
      4^{-1}k(C_{2L}C_{p}+\gamma+\beta)\|\delta_{y}e^{n+\frac{1}{2}}\|_{2}^{2}+
     \end{equation*}
      \begin{equation*}
      4^{-1}k(1+\beta+\gamma+3C_{2L}C_{p})\|\delta_{y}e^{n+1}\|_{2}^{2}+4^{-1}C_{4}^{2}C_{p}^{2}k(k^{2}+k^{-1}h^{4}+h^{4})^{2}.
     \end{equation*}
      Utilizing relation $(\ref{61a})$ with $z=y$, it is not hard to observe that this estimate implies
     \begin{equation}\label{67}
     E_{(y)}^{n+1}\leq E_{(y)}^{n+\frac{1}{2}}+4^{-1}k(1+\beta+\gamma+3C_{2L}C_{p})
     \left(\|\delta_{y}e^{n+\frac{1}{2}}\|_{2}^{2}+\|\delta_{y}e^{n+1}\|_{2}^{2}\right)+4^{-1}C_{4}^{2}C_{p}^{2}k(k^{2}+k^{-1}h^{4}+h^{4})^{2}.
     \end{equation}
     Replacing the discrete times $t_{n+\frac{1}{2}}$ and $t_{n+1}$ in the predictor and corrector phases with $t_{n}$ and $t_{n+\frac{1}{2}}$, respectively, using Lemmas $\ref{l1}$-$\ref{l3}$ and performing straightforward calculations, this results in
     \begin{equation}\label{68a}
      E_{(x)}^{n}\leq E_{(x)}^{n-1}+C_{\alpha}k\left(E_{(x)}^{n-\frac{1}{2}}+E_{(x)}^{n}\right)+12C_{4}^{2}k(k^{2}+k^{-1}h^{4}+h^{4})^{2},
     \end{equation}
     \begin{equation}\label{68}
     E_{(y)}^{n+\frac{1}{2}}\leq E_{(y)}^{n}+4^{-1}k(1+\beta+\gamma+3C_{2L}C_{p})\left(\|\delta_{y}e^{n+\frac{1}{2}}\|_{2}^{2}
     +\|\delta_{y}e^{n}\|_{2}^{2}\right)+4^{-1}C_{4}^{2}C_{p}^{2}k(k^{2}+k^{-1}h^{4}+h^{4})^{2},
     \end{equation}
      for $n\geq1$, and
     \begin{equation}\label{69}
     E_{(y)}^{\frac{1}{2}} \leq C_{6}^{2}\exp\left(\frac{1+C_{2L}}{4}\right)(k^{2}+k^{-1}h^{4}+h^{4})^{2},
     \end{equation}
     where $C_{6}$ is a positive constant independent of the space step $h$ and the time step $k$. Inequalities $(\ref{67})$ and $(\ref{68})$ indicate that
     \begin{equation*}
     E_{(y)}^{q+1}\leq E_{(y)}^{q+\frac{1}{2}}+4^{-1}k(1+\beta+\gamma+3C_{2L}C_{p})\left(\|\delta_{y}e^{q+\frac{1}{2}}\|_{2}^{2}
     +\|\delta_{y}e^{q+1}\|_{2}^{2}\right)+4^{-1}C_{4}^{2}C_{p}^{2}k(k^{2}+k^{-1}h^{4}+h^{4})^{2},
     \end{equation*}
     for $q=0,\frac{1}{2},1,...,n-\frac{1}{2},n$. Summing up this estimate for $q=0,\frac{1}{2},1,...,n$, yields
     \begin{equation*}
     E_{(y)}^{n+1}\leq E_{(y)}^{\frac{1}{2}}+4^{-1}k(1+\beta+\gamma+3C_{2L}C_{p})\underset{q=\frac{1}{2}}{\overset{n+\frac{1}{2}}\sum}
     \left(\|\delta_{y}e^{q+\frac{1}{2}}\|_{2}^{2}+\|\delta_{y}e^{q}\|_{2}^{2}\right)+4^{-1}C_{4}^{2}C_{p}^{2}(2n+1)k(k^{2}+k^{-1}h^{4}+h^{4})^{2}.
     \end{equation*}
     But $(2n+1)k\leq 2T+1$. This fact combined with this estimate and $(\ref{69})$ give
     \begin{equation*}
     E_{(y)}^{n+1}\leq \left(C_{6}^{2}e^{\frac{1+C_{2L}}{4}}+\frac{C_{4}^{2}C_{p}^{2}(2T+1)}{4}\right)
     (k^{2}+k^{-1}h^{4}+h^{4})^{2}+\frac{1+\beta+\gamma+3C_{2L}C_{p}}{4}k\underset{q=\frac{1}{2}}{\overset{n+\frac{1}{2}}\sum}
     \left(\|\delta_{y}e^{q+\frac{1}{2}}\|_{2}^{2}+\|\delta_{y}e^{q}\|_{2}^{2}\right).
     \end{equation*}
     This can be rewritten after rearranging terms as
      \begin{equation*}
     E_{(y)}^{n+1}\leq \left(C_{6}^{2}e^{\frac{1+C_{2L}}{4}}+\frac{C_{4}^{2}C_{p}^{2}(2T+1)}{4}\right)
     (k^{2}+k^{-1}h^{4}+h^{4})^{2}+\frac{1+\beta+\gamma+3C_{2L}C_{p}}{2}k\underset{q=\frac{1}{2}}{\overset{n+1}\sum}\|\delta_{y}e^{q}\|_{2}^{2}.
     \end{equation*}
     Indeed, the summation index varies in the range: $\frac{1}{2},1,...,n,n+\frac{1}{2}$. Applying inequality $(\ref{el3})$ given in Lemma $\ref{l3}$ to get
     \begin{equation*}
     E_{(y)}^{n+1}\leq \left(C_{6}^{2}e^{\frac{1+C_{2L}}{4}}+\frac{C_{4}^{2}C_{p}^{2}(2T+1)}{4}\right)
     (k^{2}+k^{-1}h^{4}+h^{4})^{2}\exp\left((1+\beta+\gamma+3C_{2L}C_{p})(n+1)k\right).
     \end{equation*}
     Since $(n+1)k\leq T+1$, absorbing all the constants into a positive constant $\widetilde{C}_{1}$, we obtain
      \begin{equation}\label{70}
     E_{(y)}^{n+1}\leq \widetilde{C}_{1}(k^{2}+k^{-1}h^{4}+h^{4})^{2}.
     \end{equation}
     In addition, it is easy to see that $(\ref{66})$ implies
     \begin{equation}\label{71}
     E_{(x)}^{n+\frac{1}{2}}\leq \widetilde{C}_{2}(k^{2}+k^{-1}h^{4}+h^{4})^{2},
     \end{equation}
     where all the constant have been absorbed into a positive constant $\widetilde{C}_{2}$. Analogously, it is not hard to establish that the term $E_{(x)}^{n}$ and $E_{(y)}^{n+\frac{1}{2}}$, associated with the predicted error $e^{n}_{ij}$ and the corrected one $e^{n+\frac{1}{2}}_{ij}$, at the discrete points $(x_{i},y_{j},t_{n})$ and $(x_{i},y_{j},t_{n+\frac{1}{2}})$, respectively, satisfy
     \begin{equation}\label{72}
     E_{(x)}^{n}\leq \widehat{C}_{1}(k^{2}+k^{-1}h^{4}+h^{4})^{2},\text{\,\,\,for\,\,\,}n\geq1,
     \end{equation}
      \begin{equation}\label{73}
     E_{(y)}^{n+\frac{1}{2}}\leq \widehat{C}_{2}(k^{2}+k^{-1}h^{4}+h^{4})^{2},\text{\,\,\,for\,\,\,}n\geq0.
     \end{equation}
      Plugging estimates $(\ref{71})$, $(\ref{73})$ and $(\ref{61a})$ (respectively, $(\ref{70})$, $(\ref{72})$ and $(\ref{61a})$), utilizing the time step requirement $(\ref{sR})$, that is $k=h^{4/3}$, along with the $L^{\infty}(0,T;H^{2})$-norm given in $(\ref{dn})$, provides
     \begin{equation*}
      \|e^{n+\sigma}\|_{H^{2}}^{2}\leq 2\|e^{n+\sigma}\|_{2}^{2}+\alpha\left[\|\delta_{x}e^{n+\sigma}\|_{2}^{2}+\|\delta_{y}e^{n+\sigma}\|_{2}^{2}+
      \frac{h^{2}}{12}\left(\|\delta_{x}^{2}e^{n+\sigma}\|_{2}^{2}+\|\delta_{y}^{2}e^{n+\sigma}\|_{2}^{2}\right)\right]= E_{(x)}^{n+\sigma}+E_{(y)}^{n+\sigma}\leq
     \end{equation*}
     \begin{equation*}
     \max\{\widetilde{C}_{1}+\widehat{C}_{1},\widetilde{C}_{2}+\widehat{C}_{2}\}(k^{2}+h^{8/3}+h^{4})^{2},
     \end{equation*}
     for $\sigma=\frac{1}{2},1$, and any $n=0,1,2,...,N-1$. The maximum over $n$, for $0\leq n\leq N-1$, on the left and right sides of these estimates implies
     \begin{equation}\label{74}
      \||e|\|_{H^{2},\infty}^{2}\leq \widetilde{C}_{3}(k^{2}+h^{8/3}+h^{4})^{2},
     \end{equation}
     where $\widetilde{C}_{3}=\max\{\widetilde{C}_{1}+\widehat{C}_{1},\widetilde{C}_{2}+\widehat{C}_{2}\}$. Taking the square root in both sides of inequality $(\ref{74})$, using relation $\||u|\|_{H^{2},\infty}-\||U|\|_{H^{2},\infty}\leq \||u-U|\|_{H^{2},\infty}=\||e|\|_{H^{2},\infty}$, together with the regularity condition $(\ref{RC})$, to get
     \begin{equation*}
     \||U|\|_{H^{2},\infty}\leq C_{0}+\widehat{C}_{3}(k^{2}+h^{8/3}+h^{4}),
     \end{equation*}
     where $\widehat{C}_{3}=\sqrt{\widetilde{C}_{3}}$. This ends the proof of estimate $(\ref{20})$ in Theorem $\ref{t}$. Finally, since $h<1$, so $h^{4}<h^{8/3}$. Substituting this into $(\ref{74})$ and taking the square root of the new estimate gives
     \begin{equation*}
      \||e|\|_{H^{2},\infty}\leq \widetilde{C}_{4}(k^{2}+h^{8/3}),
     \end{equation*}
     where $\widetilde{C}_{4}>0$, is a constant that does not depend on the time step $k$ and the grid size $h$. Thus, the proof of Theorem $\ref{t}$ is completed.
     \end{proof}

   \section{Numerical experiments and Convergence rate}\label{sec4}
    This section presents some numerical evidences to verify the stability and convergence order of the developed technique $(\ref{s1})$-$(\ref{s5})$, for solving the two-dimensional unsteady sobolev and regularized long wave equation $(\ref{1})$ subjected to initial-boundary conditions $(\ref{2})$-$(\ref{3})$. To demonstrate the efficiency of the new algorithm, we set $k=h^{\frac{4}{3}}$, for $h=2^{-l}$, $l=1,2,3,4,5$, and we compute the exact solution: $u^{n}$, approximate solution: $U^{n}$, and the error: $e^{n}=u^{n}-U^{n}$, at discrete time $t_{n}$ in the $L^{\infty}(0,T;L^{2}(\Omega))$-norm, denoted $\||\cdot|\|_{2,\infty}$ and defined in relation $(\ref{dn})$. The following formula is considered for the calculation of the error
     \begin{equation*}
       \||e(h)|\|_{2,\infty}=\underset{0\leq n\leq N}{\max}\|u^{n}-U_{h}^{n}\|_{2},
       \end{equation*}
       where $U_{h}^{n}$ denotes the numerical solution obtained at time level $n$ and associated with a mesh size $h$. Furthermore, the convergence rate $R(k,h)$ of the proposed approach is estimated using the formula
       \begin{equation*}
        R(k,h)=\log_{2}(\||e(2h)|\|_{2,\infty}/\||e(h)|\|_{2,\infty}).
       \end{equation*}
       Here we set $k=h^{\frac{4}{3}}$. Lastly, the numerical calculations are carried out using MATLAB R$2013b$.\\

   $\bullet$ \textbf{Example $1.$} Let $\Omega=(0,1)^{2}$ be the fluid region and let $T=1$ be the final time. Consider the problem given in \cite{ks} by
        \begin{equation*}
        u_{t}-(u_{txx}+u_{tyy})-(u_{xx}+u_{yy})+u=0,\text{\,\,\,on\,\,\,\,}\Omega\times(0,1], \\
       \end{equation*}
   with initial and boundary conditions
   \begin{equation*}
    \left.
      \begin{array}{ll}
        u(x,y,0)=\sin(\pi x)\sin(\pi y),\text{\,\,\,\,for\,\,\,\,\,}(x,y)\in\Omega\cup\partial\Omega, & \hbox{\,} \\
       \text{\,}\\
        u(x,y,t)=0,\text{\,\,\,\,for\,\,\,\,\,}(x,y,t)\in\partial\Omega\times[0,1]. & \hbox{,} \\
      \end{array}
    \right.
   \end{equation*}
    The exact solution $u$ of the initial-boundary value problem $(\ref{1})$-$(\ref{3})$ is given by
      \begin{equation*}
     u(x,y,t)=e^{-t}\sin(\pi x)\sin(\pi y).
     \end{equation*}

       \textbf{Table 1} $\label{T1}$. Stability and convergence rate $R(k,h)$ of the new three-level time split technique with varying mesh grid $h$ and time step $k$, where $k=h^{4/3}$.
          \begin{equation*}
           \begin{tabular}{|c|c|c|c|c|}
            \hline
            $h$ & $\||u|\|_{\infty,2}$ &$\||U|\|_{\infty,2}$ & $\||e(h)|\|_{\infty,2}$ & R(k,h)\\
             \hline
            $2^{-1}$ & $5.000\times10^{-1}$ & $4.8602\times10^{-1}$  &  $1.0113\times10^{-2}$ &   --  \\
            \hline
            $2^{-2}$ & $4.9908\times10^{-1}$ & $4.9863\times10^{-1}$  &  $1.7103\times10^{-3}$ & 2.5738 \\
            \hline
            $2^{-3}$ & $4.9998\times10^{-1}$ & $4.9887\times10^{-1}$  & $2.7873\times10^{-4}$ & 2.6173 \\
            \hline
            $2^{-4}$ & $4.9898\times10^{-1}$ & $4.9779\times10^{-1}$  &  $4.4420\times10^{-5}$  & 2.6496 \\
            \hline
            $2^{-5}$ & $5.0001\times10^{-1}$ & $5.0002\times10^{-1}$   &  $6.8298\times10^{-5}$ & 2.7013 \\
           \hline
          \end{tabular}
            \end{equation*}

      $\bullet$ \textbf{Example $2$}\cite{ks}. Let $\Omega=(0,1)\times(0,1)$ and $[0,T]=[0,1]$. Consider equation $(\ref{1})$ of the form
      \begin{equation*}
        u_{t}-(u_{txx}+u_{tyy})-(u_{xx}+u_{yy})+f(x,y,t,u)=0,\text{\,\,\,on\,\,\,\,}\Omega\times(0,1], \\
       \end{equation*}
     subjects to initial and boundary conditions
   \begin{equation*}
    \left.
      \begin{array}{ll}
        u(x,y,0)=\sin(\pi x)\sin(\pi y)e^{x+y},\text{\,\,\,\,on\,\,\,\,\,}\Omega\cup\partial\Omega, & \hbox{\,} \\
       \text{\,}\\
        u(x,y,t)=0,\text{\,\,\,\,on\,\,\,\,\,}\partial\Omega\times[0,1], & \hbox{,} \\
      \end{array}
    \right.
   \end{equation*}
     where $f(x,y,t,u)=(4\pi^{2}-3)u-4\pi e^{x+y+t}[\sin(\pi x)\cos(\pi y)+\cos(\pi x)\sin(\pi y)]$.
    The analytical solution $u$ is defined as
      \begin{equation*}
     u(x,y,t)=\sin(\pi x)\sin(\pi y)e^{x+y+t}.
     \end{equation*}

       \textbf{Table 2} $\label{T2}$. Analysis of stability and convergence rate $R(k,h)$ of the developed three-level time split Leapfrog/Crank-Nicolson approach with varying space step $h$ and time step $k$. We set $k=h^{4/3}$.
          \begin{equation*}
            \begin{tabular}{|c|c|c|c|c|}
            \hline
            $h$ & $\||u|\|_{\infty,2}$ &$\||U|\|_{\infty,2}$ & $\||e(h)|\|_{\infty,2}$ & R(k,h)\\
             \hline
            $2^{-1}$ & $3.6945\times10^{0}$ & $3.6645\times10^{0}$  &  $7.3932\times10^{-2}$ &   --  \\
            \hline
            $2^{-2}$ & $3.9303\times10^{0}$ & $3.9224\times10^{0}$  &  $1.2343\times10^{-2}$ &  2.5849  \\
            \hline
            $2^{-3}$ & $3.9417\times10^{0}$ & $3.9369\times10^{0}$  &  $2.0011\times10^{-3}$ & 2.6173 \\
            \hline
            $2^{-4}$ & $3.9423\times10^{0}$ & $3.9415\times10^{0}$  &  $3.1674\times10^{-4}$ & 2.6594 \\
            \hline
            $2^{-5}$ & $3.9431\times10^{1}$ & $3.9427\times10^{1}$  &  $4.8036\times10^{-5}$ & 2.7211 \\
           \hline
          \end{tabular}
            \end{equation*}

      $\bullet$ \textbf{Example $3$}\cite{ks}. Suppose $\Omega=(0,1)\times(0,1)$ and $T=1$. Consider the $2$D sobolev and regularized long wave equation given in \cite{ks} by
      \begin{equation*}
        u_{t}-(u_{txx}+u_{tyy})-(u_{x}+u_{y})+uu_{x}+uu_{y}=0,\text{\,\,\,on\,\,\,\,}\Omega\times(0,1], \\
       \end{equation*}
     with initial condition
     \begin{equation*}
      u(x,y,0)=\frac{1}{2}\sec h^{2}[\frac{1}{\sqrt{2}}(x+y)],\text{\,\,\,\,on\,\,\,\,\,}\Omega\cup\partial\Omega,
     \end{equation*}
      and boundary condition
    \begin{equation*}
     u(0,y,t)=\sec h^{2}(y-t),\text{\,\,\,}u(1,y,t)=\sec h^{2}(y-t+1),\text{\,\,\,}u(x,0,t)=\sec h^{2}(-x+t),\text{\,\,\,}u(x,1,t)=\sec h^{2}(-x+t-1).
     \end{equation*}
     The exact solution $u$ is given by
      \begin{equation*}
     u(x,y,t)=\sec h^{2}(x+y-t).
     \end{equation*}

       \textbf{Table 3} $\label{T3}$. Stability analysis and convergence order $R(k,h)$ of the proposed three-level time split Leapfrog/Crank-Nicolson algorithm with varying mesh size $h$ and time step $k$. We take $k=h^{4/3}$.
          \begin{equation*}
           \begin{tabular}{|c|c|c|c|c|}
            \hline
            $h$ & $\||u|\|_{\infty,2}$ &$\||U|\|_{\infty,2}$ & $\||e(h)|\|_{\infty,2}$ & R(k,h)\\
             \hline
            $2^{-1}$ & $1.2069\times10^{0}$ & $1.1828\times10^{0}$  &  $2.4124\times10^{-2}$ &   --  \\
            \hline
            $2^{-2}$ & $1.0475\times10^{0}$ & $1.0433\times10^{0}$  &  $4.2324\times10^{-3}$ &  2.5739  \\
            \hline
            $2^{-3}$ & $9.6445\times10^{-1}$ & $9.6403\times10^{-1}$  & $6.9341\times10^{-4}$  & 2.6097 \\
            \hline
            $2^{-4}$ & $9.2189\times10^{-1}$ & $9.2197\times10^{-1}$  &  $1.1183\times10^{-4}$ & 2.6324 \\
            \hline
            $2^{-5}$ & $9.2188\times10^{-1}$ & $9.2188\times10^{-1}$  &  $1.7575\times10^{-5}$ & 2.6697 \\
           \hline
          \end{tabular}
            \end{equation*}
        Figures $\ref{figure1}$-$\ref{figure3}$ indicate that the developed two-step explicit/implicit approach $(\ref{s1})$-$(\ref{s5})$ is stable whereas
        \textbf{Tables} $1$-$3$ suggest that the new algorithm is temporal second-order convergent and spatial accurate with order $O(k^{8/3})$. These numerical results confirm the theory (see Section $\ref{sec3}$, Theorem $\ref{t}$).\\

     \section{General conclusions and future works}\label{sec5}
     This paper has developed a three-level time split high-order Leapfrog/Crank-Nicolson approach in an approximate solution of the two-dimensional time-dependent sobolev and regularized long wave equations arising in fluid mechanics. The stability and error estimates of the proposed numerical technique have been deeply analyzed in $L^{\infty}(0,T;H^{2})$-norm. The theoretical studies have shown that the developed numerical scheme is stable and accurate with order $O(k^{2}+h^{8/3})$, where $k$ and $h$ represent the time step and space step, respectively. These results have been confirmed by three numerical examples. Furthermore, both theoretical and numerical results suggest that the proposed algorithm $(\ref{s1})$-$(\ref{s5})$ is computational less expensive, faster and more efficient than a large class of numerical methods widely discussed in the literature \cite{1ks,ks,6ks,7ks,13ks,11ks,16ks} for solving the initial-boundary value problem $(\ref{1})$-$(\ref{3})$. The use of the $L^{\infty}(0,T;H^{2})$-norm indicates that the constructed scheme $(\ref{s1})$-$(\ref{s5})$ can be considered as a strong numerical method for the integration of general systems of $(2+1)$-dimensional nonlinear sobolev problems. Our future works will develop a three-level time split high-order Leapfrog/Crank-Nicolson formulation for solving the generalized $(2+1)$-dimensional nonlinear evolutionary models. Both stability and convergence rate of the constructed technique also will be analyzed in $L^{\infty}(0,T;H^{2})$-norm.\\
     \text{\,}\\
     \textbf{Acknowledgment.} This work has been partially supported by the Deanship of Scientific Research of Imam Mohammad Ibn Saud Islamic University
    (IMSIU) under the Grant No. $331203.$\\

    \newpage

          \begin{figure}
         \begin{center}
        Stability and convergence of the constructed three-level time split Leapfrog/Crank-Nicolson with $k=h^{4/3}.$
         \begin{tabular}{c c}
         \psfig{file=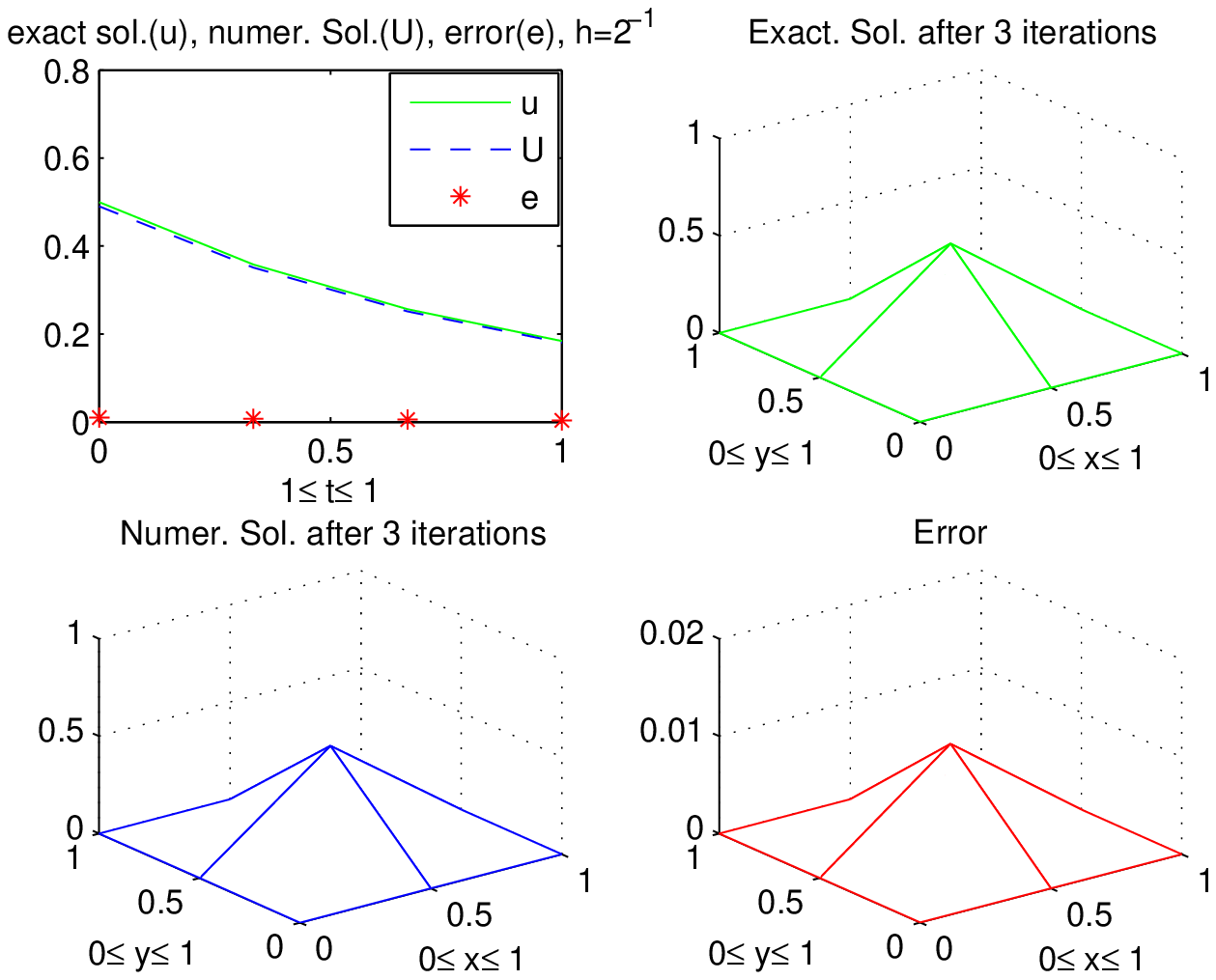,width=7cm} & \psfig{file=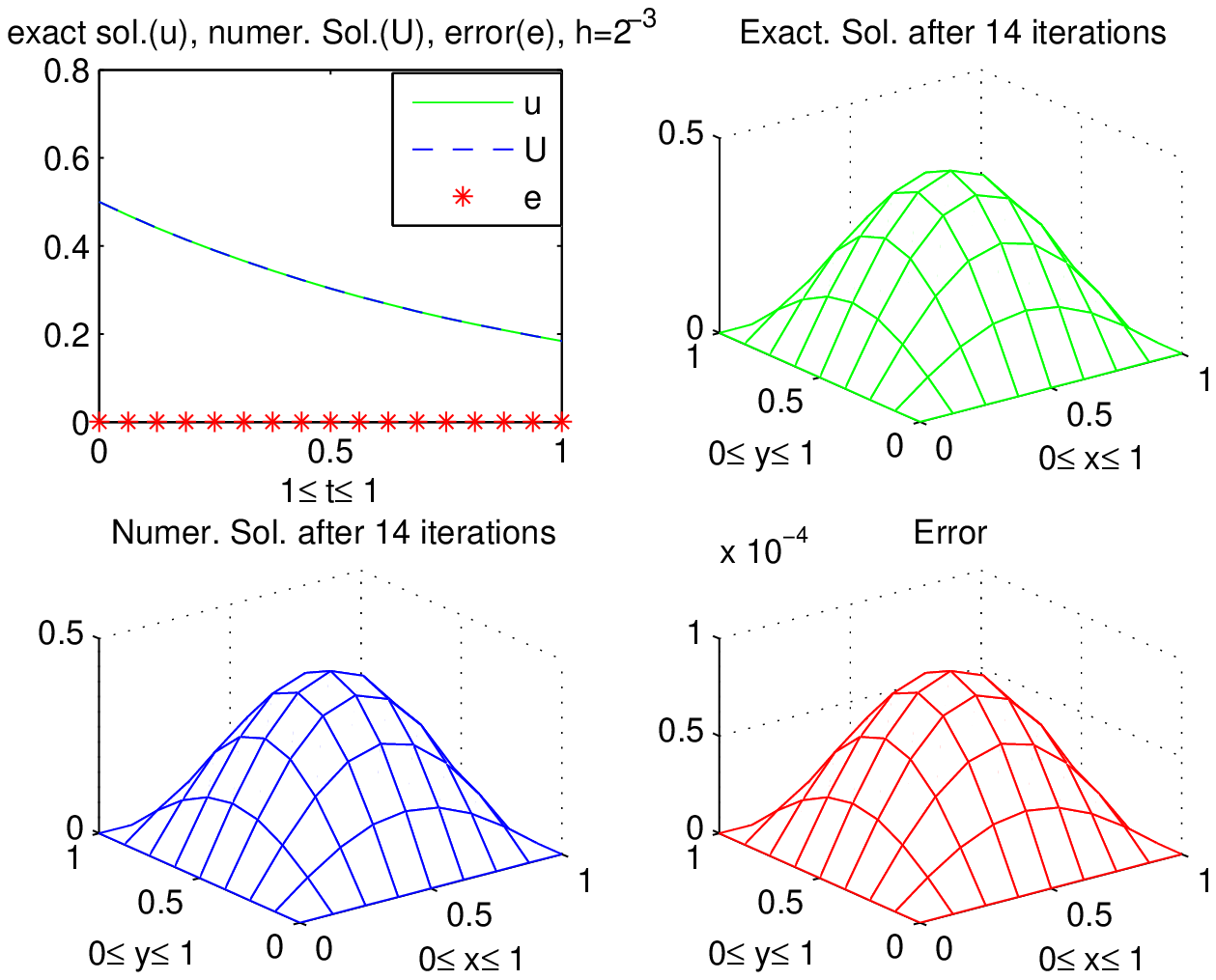,width=7cm}\\
         \psfig{file=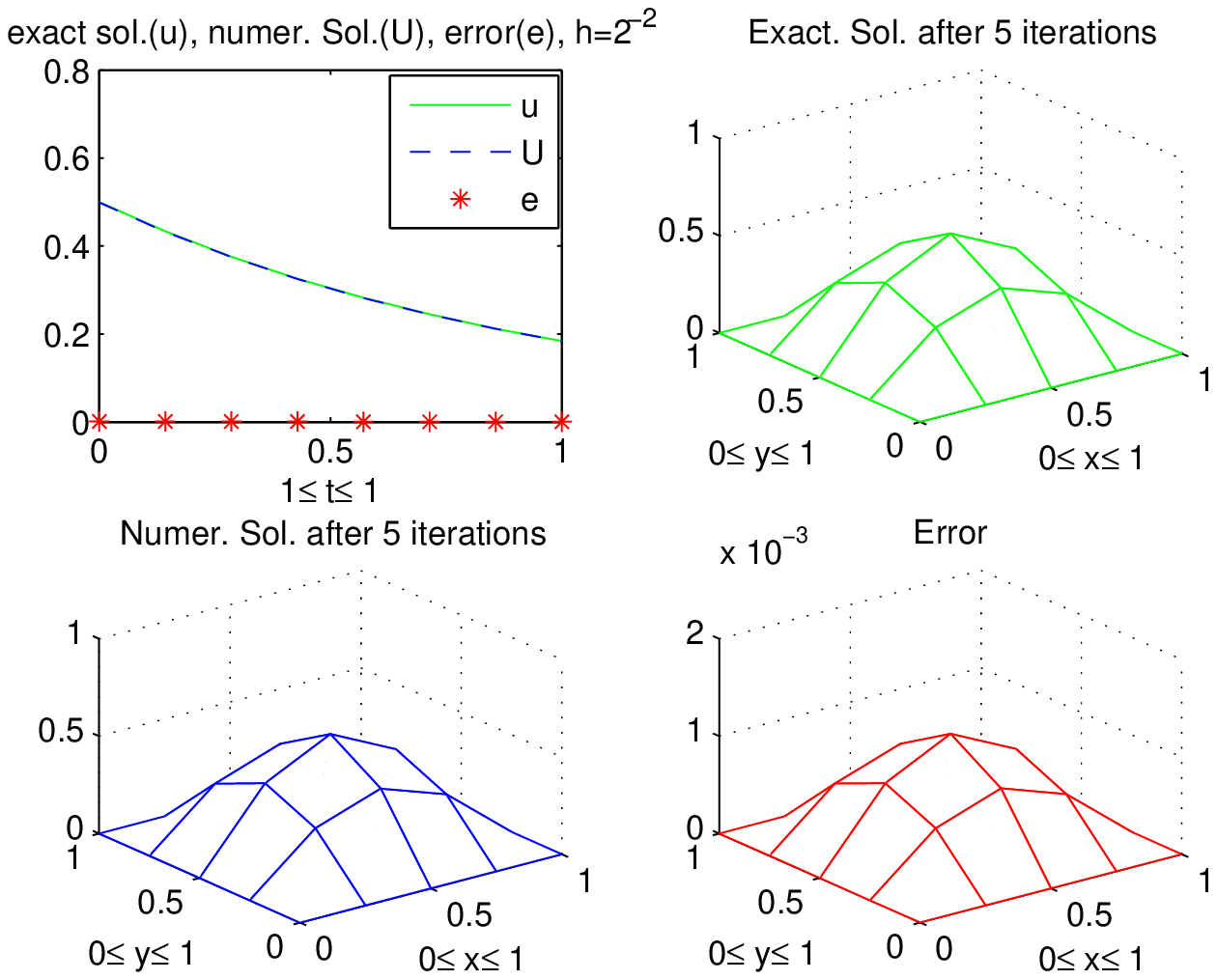,width=7cm} & \psfig{file=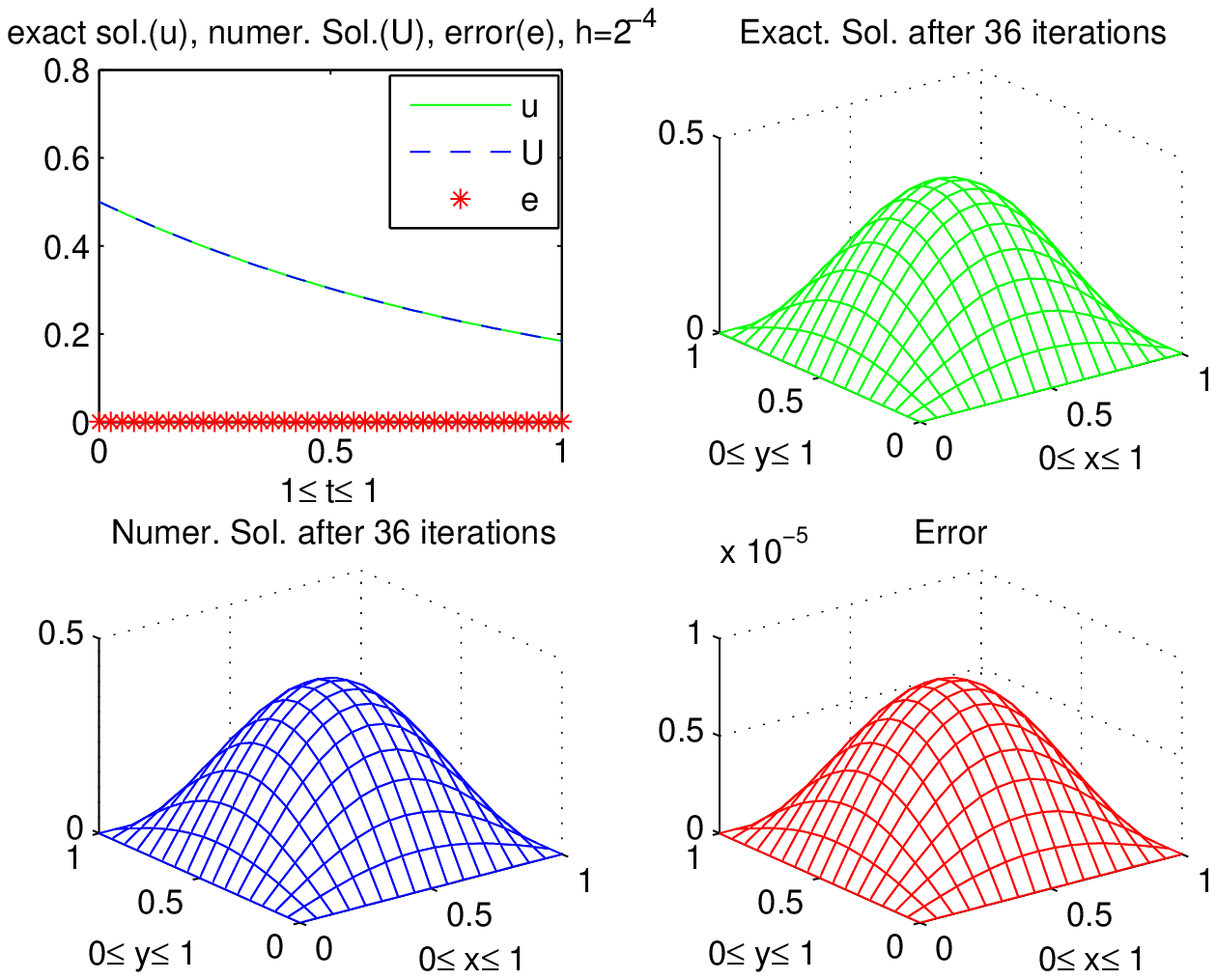,width=7cm}\\
         \end{tabular}
        \end{center}
        \caption{Exact solution(u: in green), numerical solution (U: in blue) and error(E: in red) for Example 1}
        \label{figure1}
        \end{figure}

           \begin{figure}
         \begin{center}
         Stability analysis and convergence of the developed three-level time split Leapfrog/Crank-Nicolson approach with $k=h^{4/3}$.
         \begin{tabular}{c c}
         \psfig{file=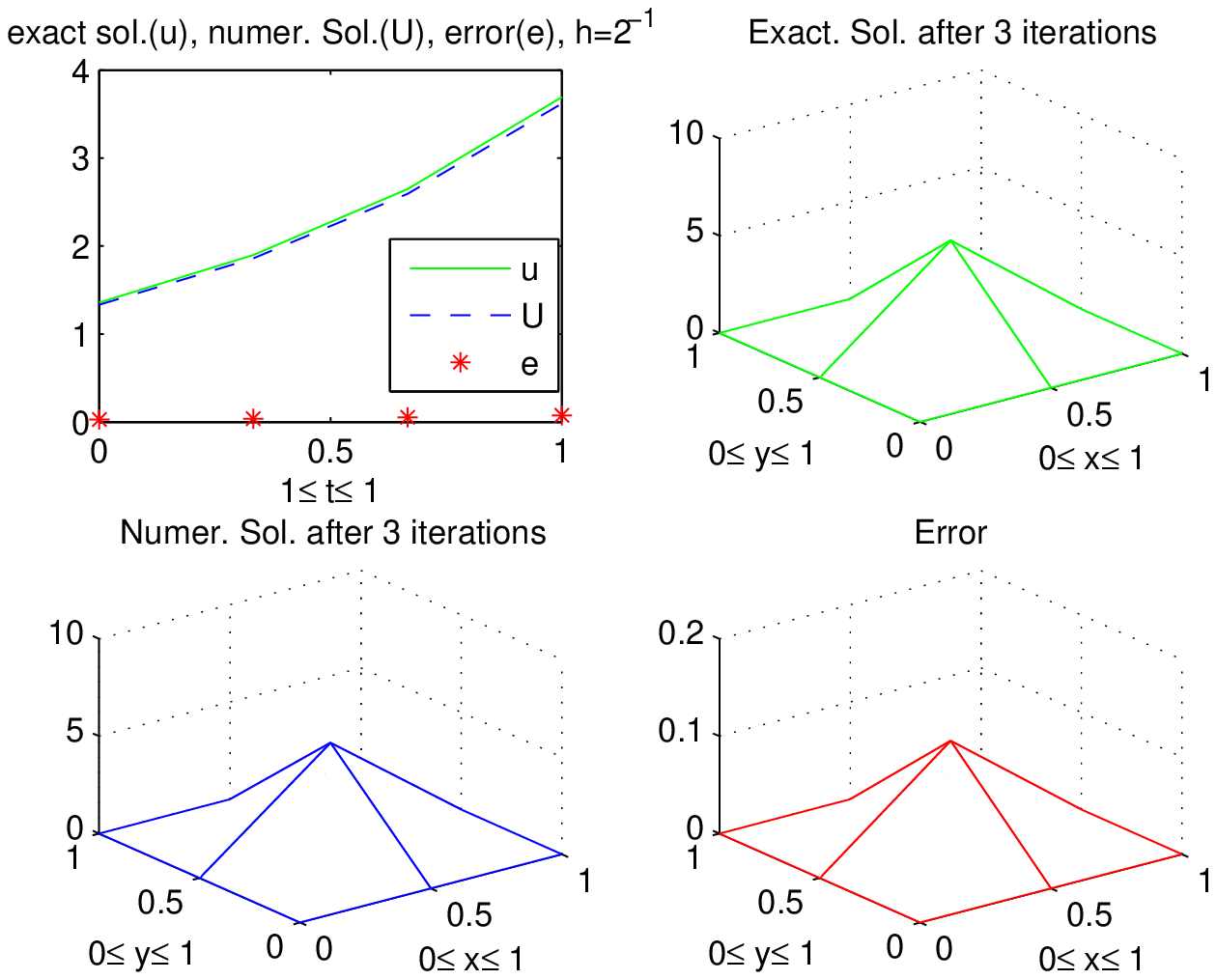,width=7cm} & \psfig{file=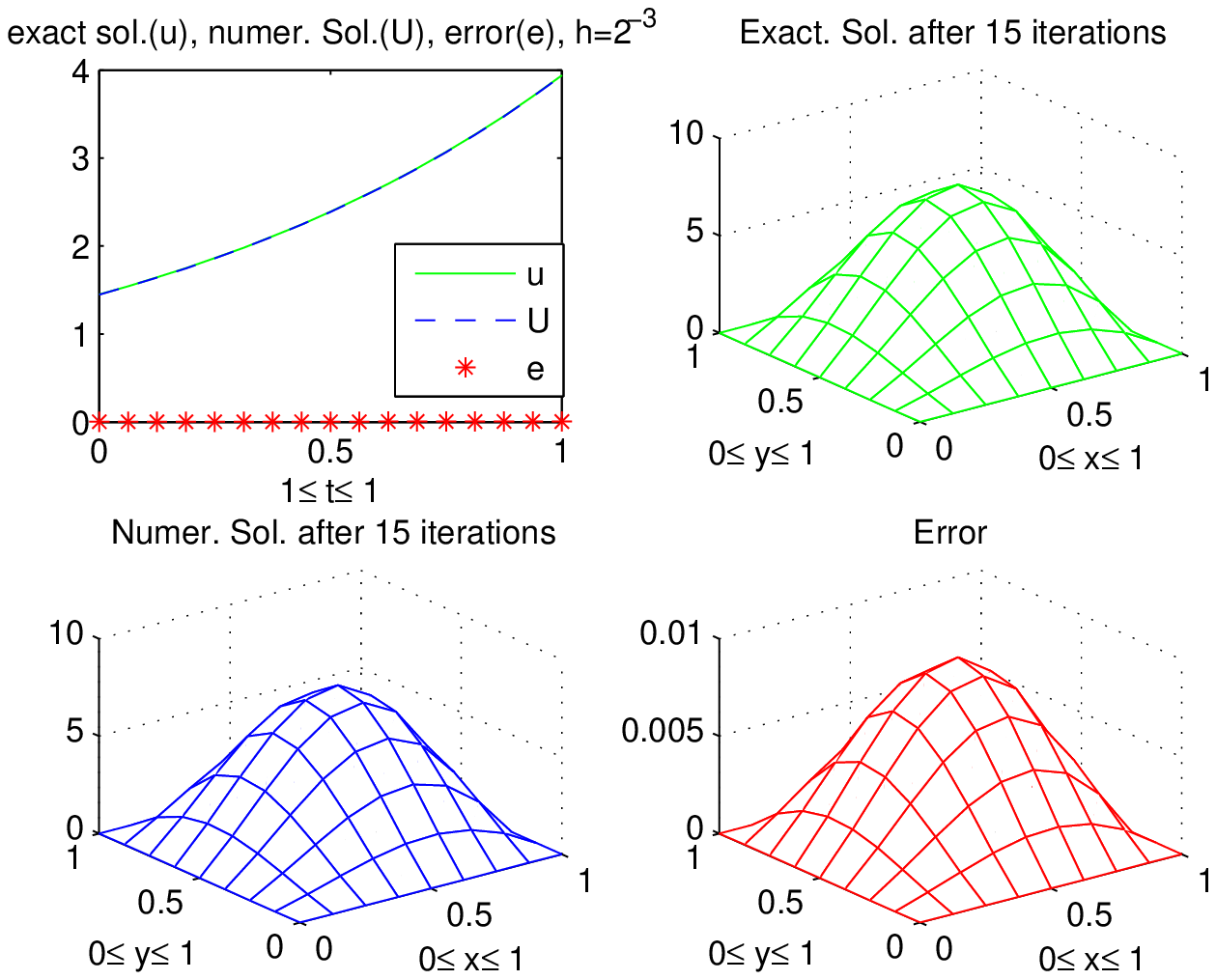,width=7cm}\\
         \psfig{file=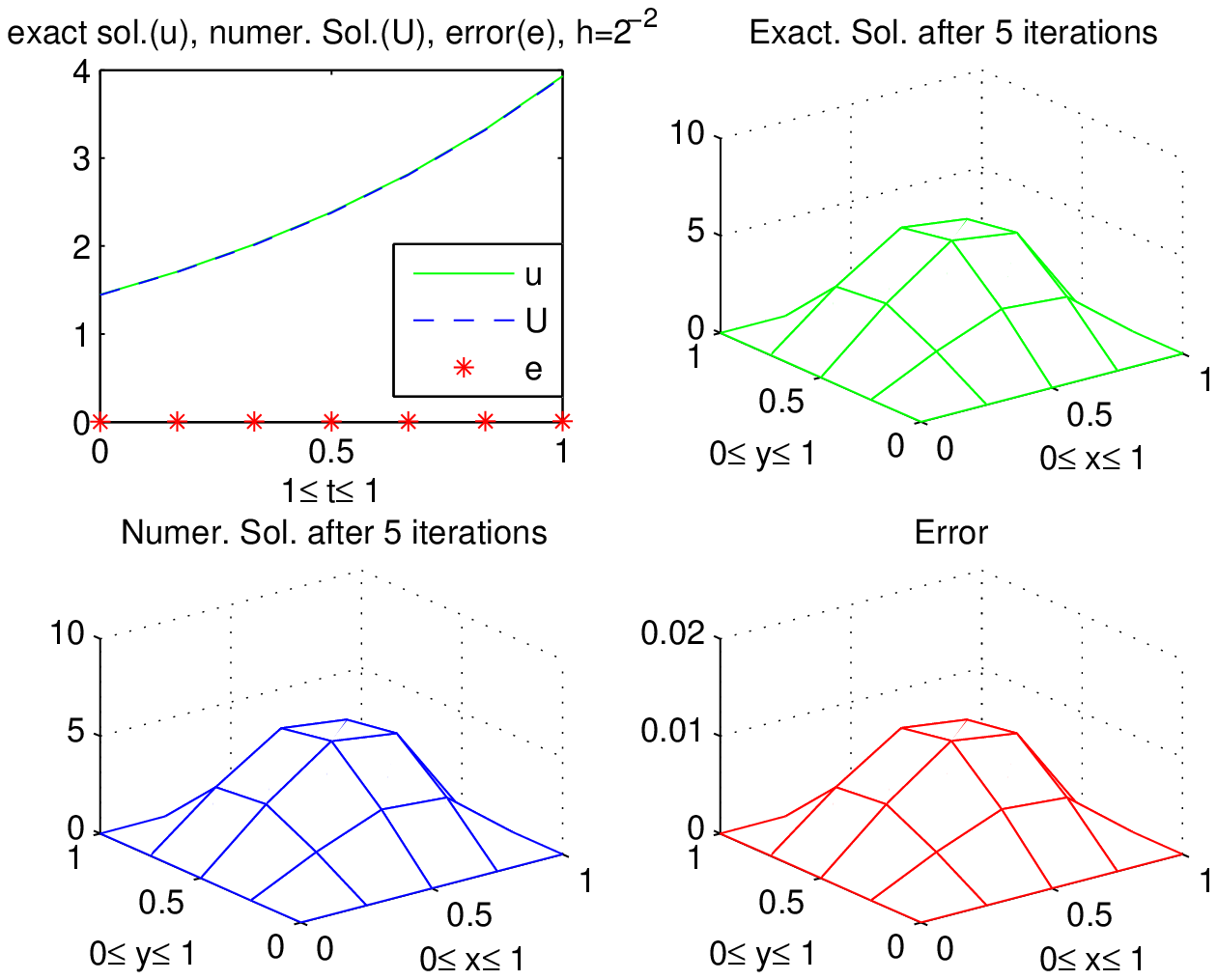,width=7cm} & \psfig{file=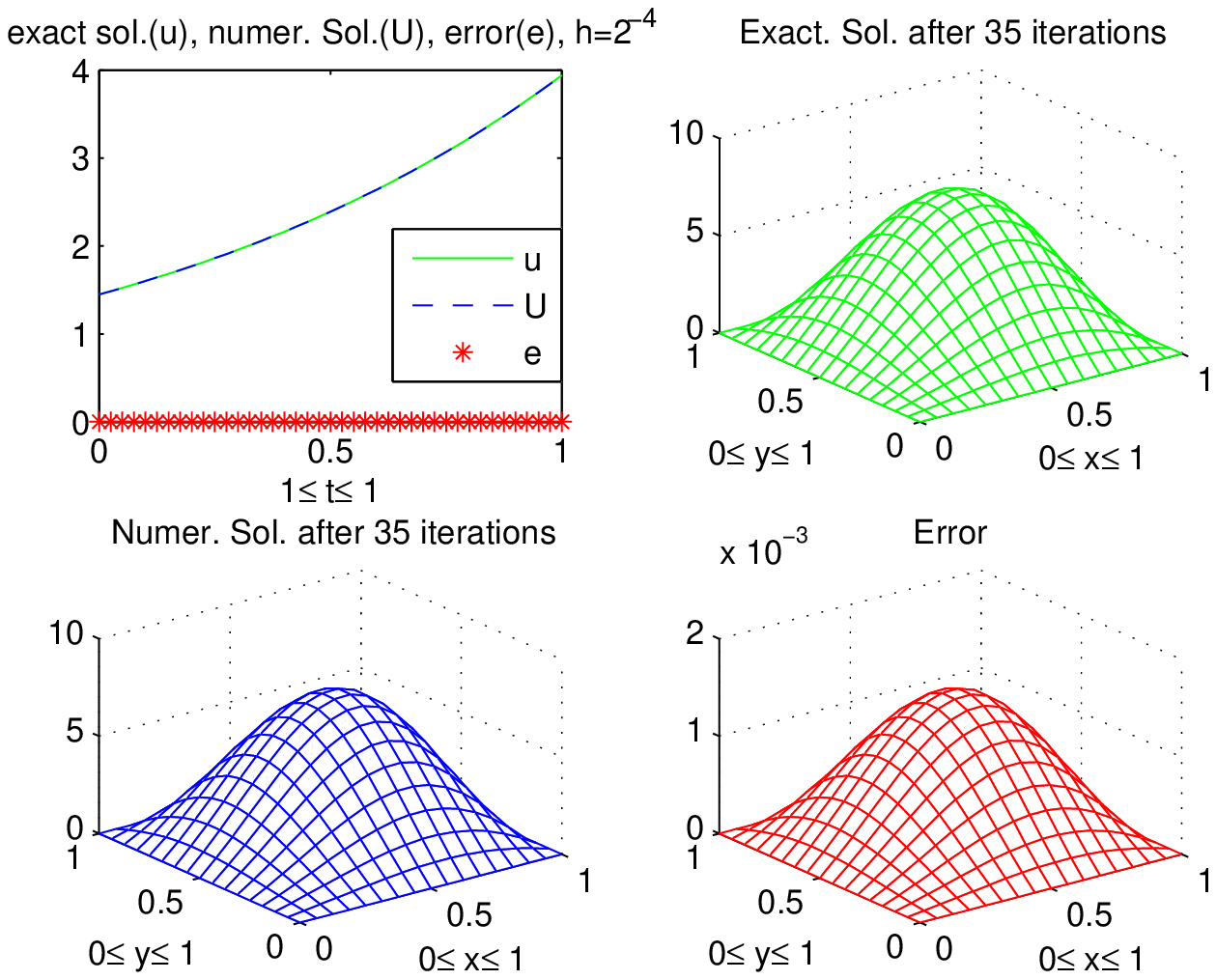,width=7cm}\\
         \end{tabular}
        \end{center}
        \caption{analytical solution(u: in green), computed solution(U: in blue) and error(E: in red) for Example 2}
        \label{figure2}
        \end{figure}

       \begin{figure}
         \begin{center}
          Stability and convergence of the proposed three-level time split explicit/implicit approach with $k=h^{4/3}$
         \begin{tabular}{c c}
         \psfig{file=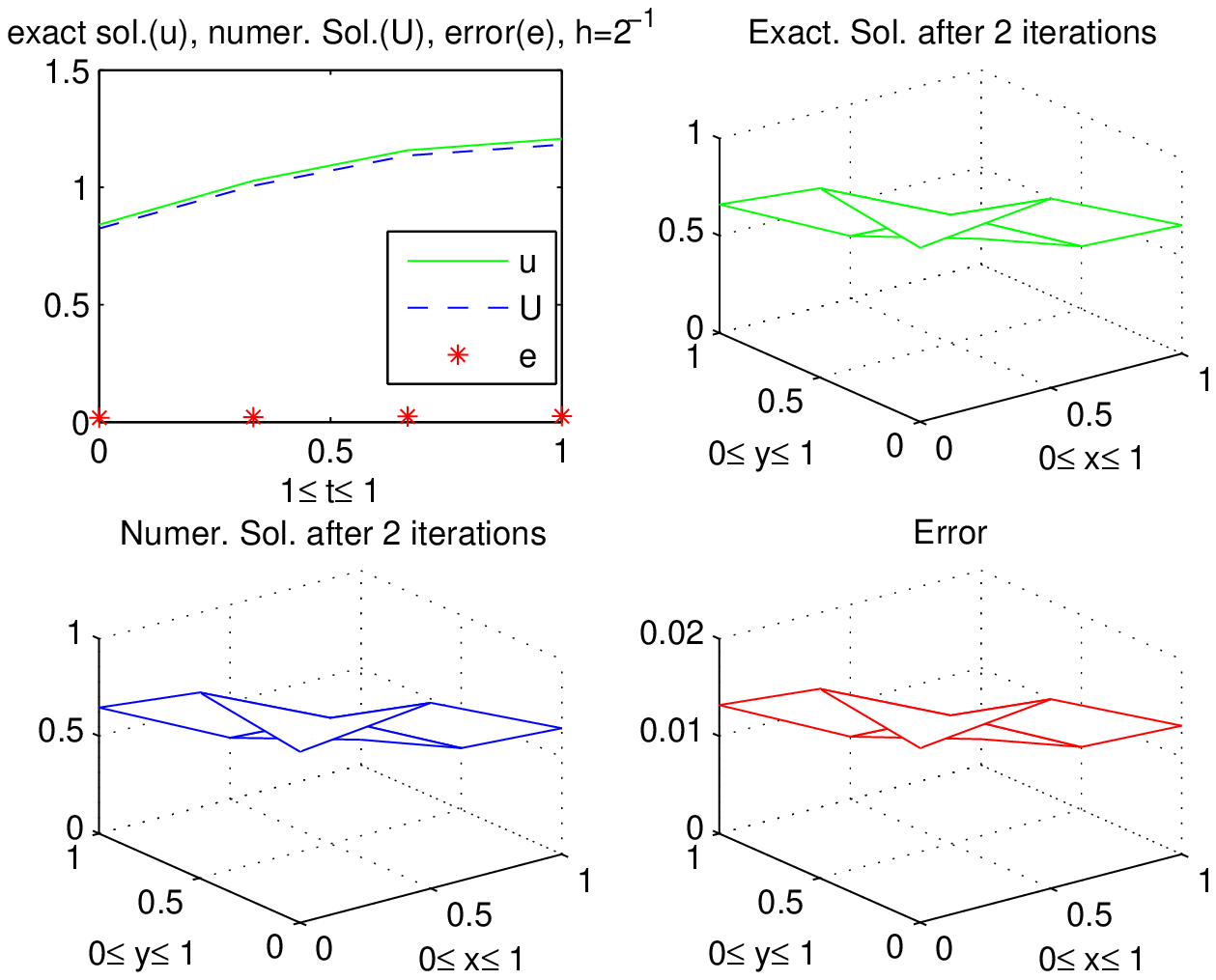,width=7cm} & \psfig{file=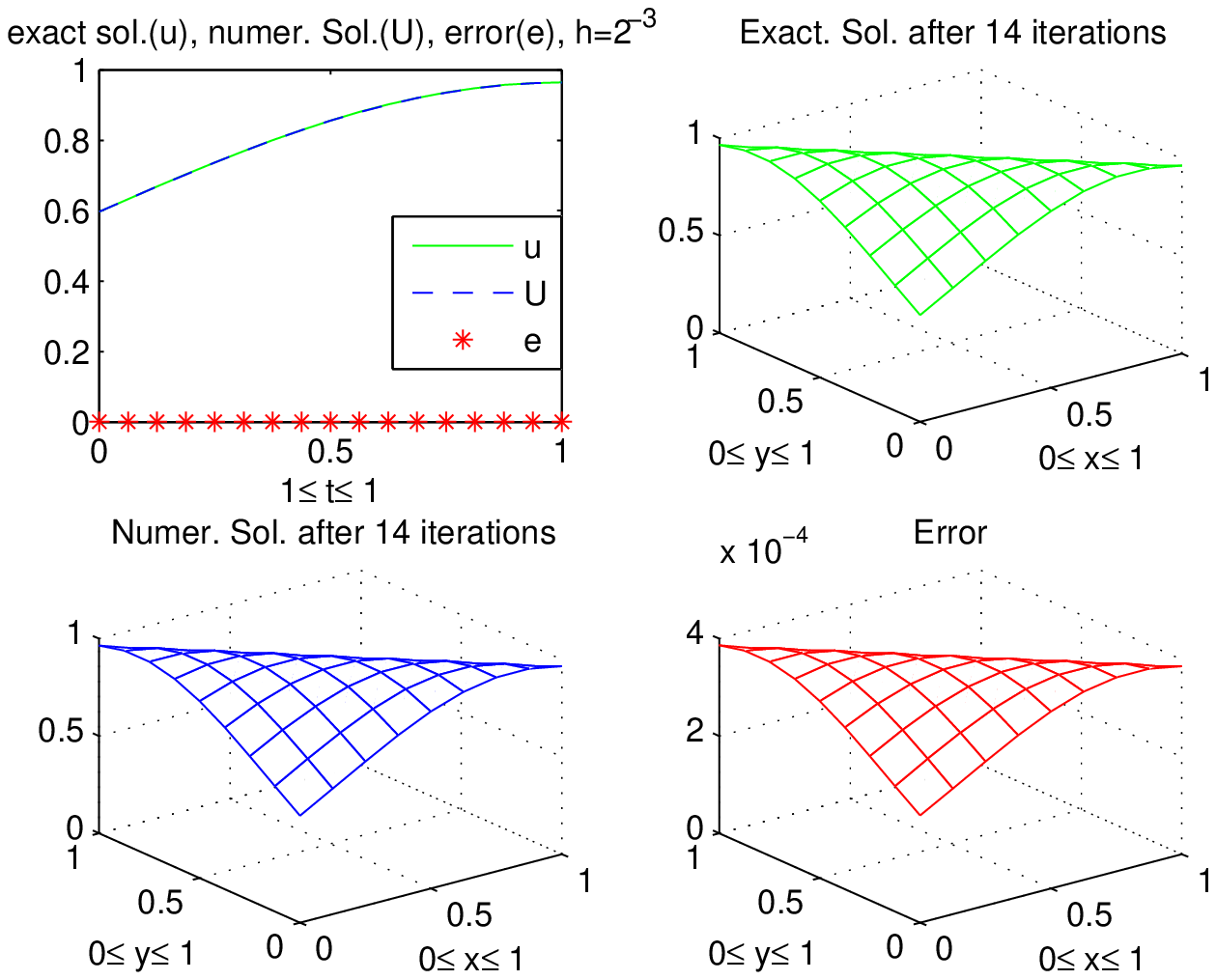,width=7cm}\\
         \psfig{file=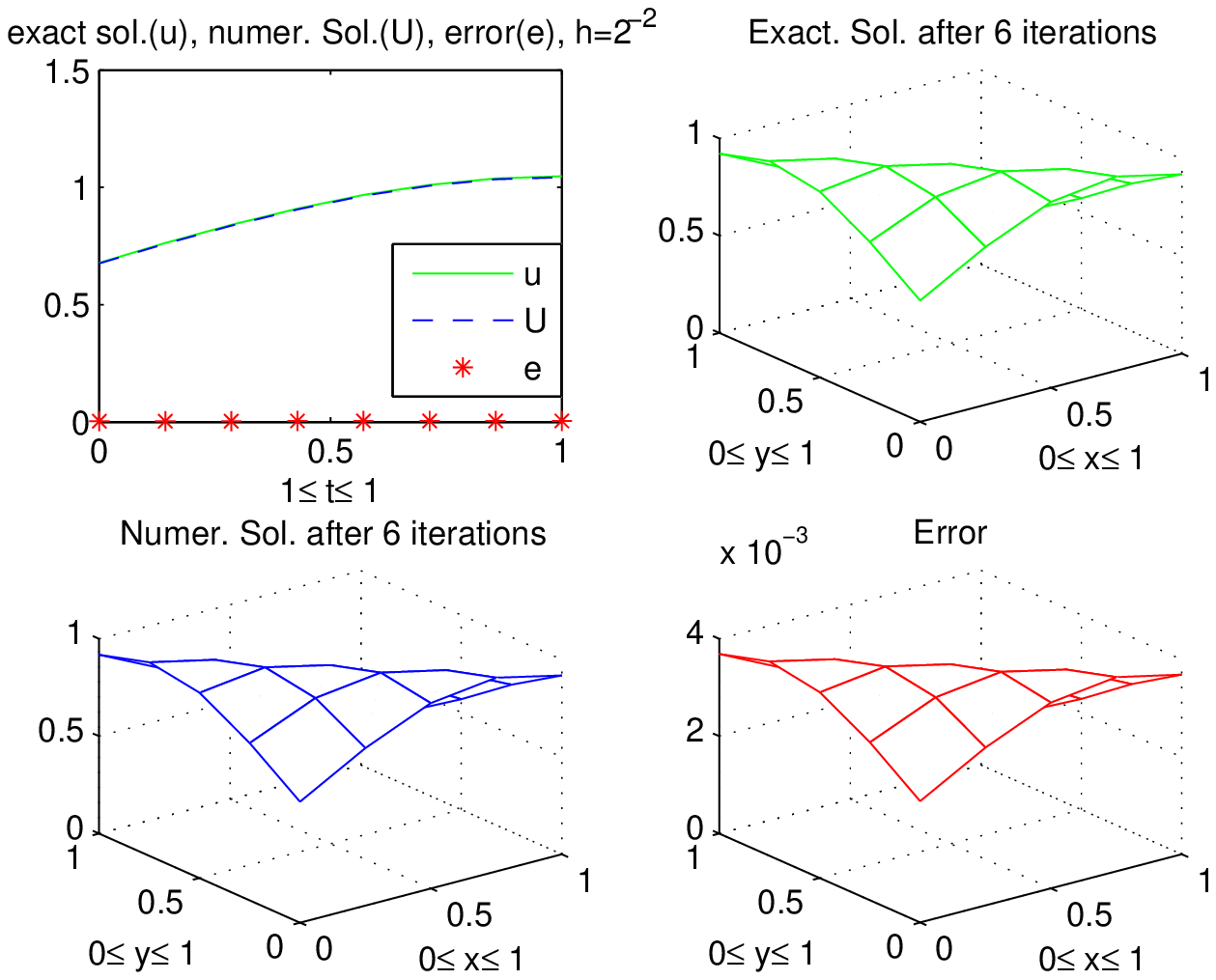,width=7cm} & \psfig{file=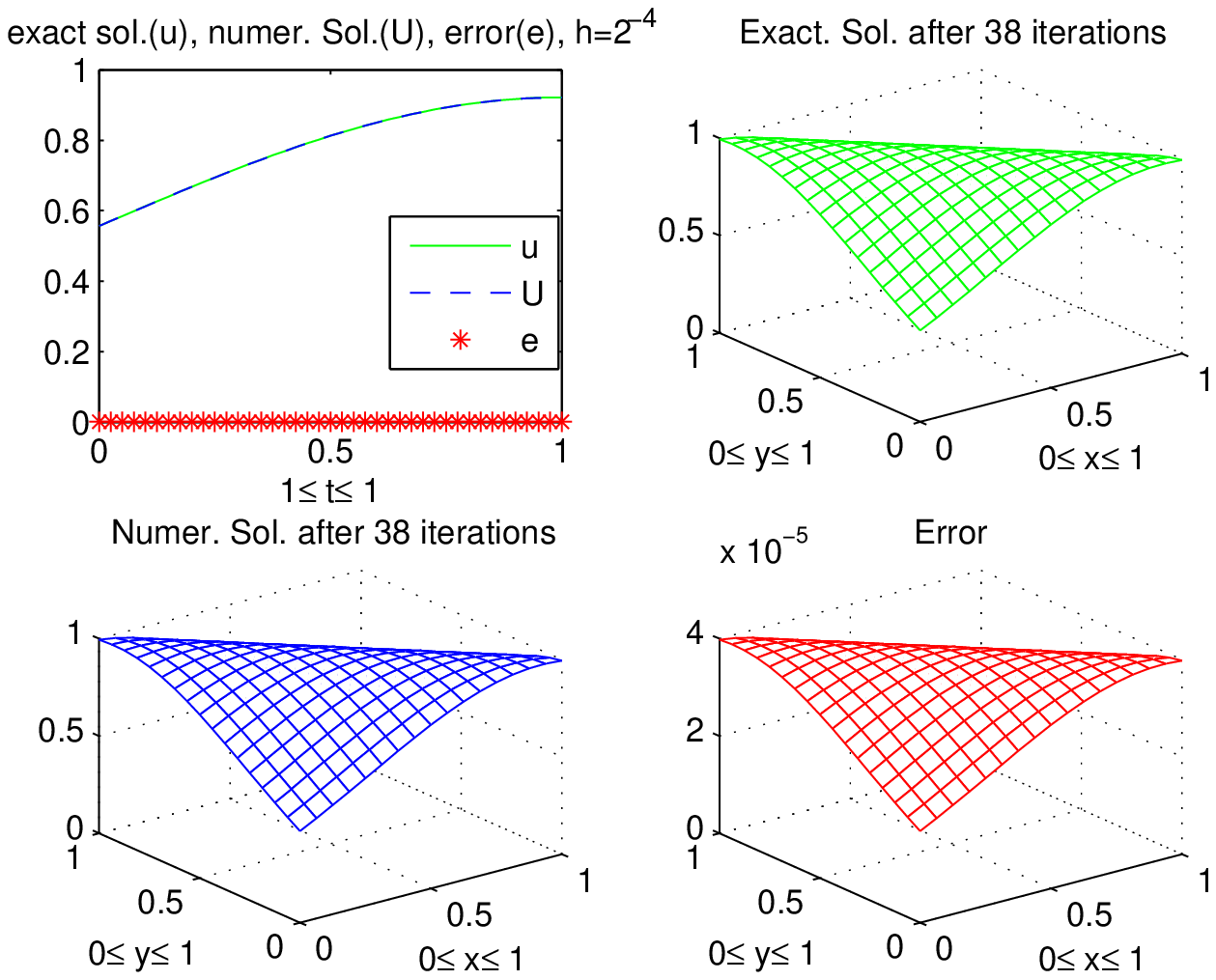,width=7cm}\\
         \end{tabular}
        \end{center}
        \caption{exact solution(u: in green), approximate solution (U: in blue) and error (E: in red) for Problem 3}
        \label{figure3}
        \end{figure}
       \end{document}